\documentclass[12pt,a4paper]{amsart} 

\usepackage{graphicx}
\usepackage{amsmath, amssymb,amsfonts,amsthm}
\usepackage{tikz,tabularx}
\usepackage{geometry}
\usepackage{natbib}
\usepackage[normalem]{ulem}
\usepackage[foot]{amsaddr}
\geometry{lmargin=2cm,rmargin=2cm,bmargin=3cm,tmargin=3cm}
\usetikzlibrary{arrows,positioning,decorations,matrix}

\newcommand{\C}{\mathcal C}

\newcommand{\newl}{\ell}
\newcommand{\al}{\alpha}
\newcommand{\pionesub}[1]{\pi_1^{(#1)}}
\newcommand{\pitwosub}[1]{\pi_2^{(#1)}}
\newcommand{\Dbg}{\mathcal{D}}
\newcommand{\eg}{{\it Example }}
\newcommand{\op}[1]{\mathrel{\mathop{\rightarrow}^{\mathrm{#1}}}}

\newtheorem{definition}{Definition}[section]
\newtheorem{lemma}{Lemma}[section]
\newtheorem{corollary}{Corollary}[section]
\newtheorem{theorem}{Theorem}[section]
\title[Algebraic double cut and join]{Algebraic double cut and join -- A group-theoretic approach to the operator on multichromosomal genomes}

\author{Sangeeta Bhatia \and Attila Egri-Nagy \and Andrew R. Francis}
\address{Centre for Research in Mathematics\\
University of Western Sydney\\
Australia}
\email{s.bhatia@uws.edu.au,a.egri-nagy@uws.edu.au,a.francis@uws.edu.au}

\keywords{genome rearrangements , double cut and join , group action , permutation}

\begin{document}

\maketitle

\begin{abstract}
Establishing a distance between genomes is a significant problem in computational genomics, because its solution can be used to establish evolutionary relationships including phylogeny. 

The ``double cut and join'' (DCJ) model of chromosomal rearrangement proposed by \citet{Yancopoulos2005} has received attention as it can model inversions, translocations, fusion and fission on a multichromosomal genome that may contain both linear and circular chromosomes.
In this paper, we realize the DCJ operator as a group action on the space of multichromosomal genomes.  We study this group action, deriving some properties of the group and finding group-theoretic analogues for the key results in the DCJ theory.

\end{abstract}

\section{Introduction}
\label{sec:intro}

The use of genome rearrangements to estimate evolutionary distance dates back as far as \citet{Watterson-chrom-reversal-1982}.  The novelty of this approach is that it ignores single nucleotide polymorphisms (SNPs) and takes genes and their relative positions (and orientation) as the fundamental unit of DNA for the purposes of distance calculations.  This is particularly valuable in the case of bacterial DNA, which undergoes relatively frequent rearrangement (relative to eukaryotes, that is), but which also experiences significant horizontal, or lateral, gene transfer (HGT).  The use of large-scale rearrangements to establish distance is thought to be less vulnerable to the effects of HGT than the use of SNPs~\citep{Darling2008}.

The \emph{double cut and join} (DCJ) operator, introduced by \citet{Yancopoulos2005} (see also \citet{Bergeron2006b}), provided a significant breakthrough by treating a much larger family of operations acting on a more general, multichromosomal genome, and showing how distance can be expressed in a very simple formula based on features of a graph derived directly from the genome arrangements.  While the DCJ operator treats all operations in its remit as equally likely, it is possible that this operator may provide a valuable base for operators that account for differences in frequency of different operations, and may potentially be specialised to genomes with specific chromosomal structure (such as a single circular chromosome).  
 
With this potential in mind, in this paper we translate the DCJ operator into a group-theoretic setting.    
We show that by expressing the multi-chromosomal genome with $n$ oriented regions as a permutation of $\{1,\dots,2n\}$, a DCJ operator can be defined as an action on the genome.  Hence, the set of double cut and join operators generates a group acting on the entire genome space.  The DCJ distance is then a path distance on the Cayley graph of this group (as described in~\citet{egrinagy2013group}).  We show how the DCJ distance between two genomes can be obtained in a very simple way from the permutation encoding of the genomes.  We obtain a formulation for the DCJ distance that is analogous to the distance formula found by~\citet{Yancopoulos2005}, but is expressed in terms of features of a permutation.  This is derived independently of the established DCJ theory.

Over the last decade, there have been several examples of  algebraic approaches to modeling biological phenomena, particularly in the genomic distance literature.  While the traditional approach to the problem of finding distance between genomes is to cast them as permutations, limited use has been made of the powerful machinery that algebra provides to deal with permutations. Adopting an algebraic viewpoint might in fact reveal deep insights and lead to simplification.

A recent example of this is the work of \citet{lu2006analysis} who used the theory of symmetric groups to give an algorithm that gives a sorting sequence between circular genomes using fission, fusion and block interchanges.  More recently, group theory has been used by the authors' group to calculate the inversion distance between circular genomes under one model~\citep{egrinagy2013group}, and a wider algebraic framework has been proposed that includes DNA knotting~\citep{francis2013algebraic}.
 
A circular genome is modeled as a cyclic permutation by \citet{meidanis2000alternative}, treating a genome as a permutation of the genes $a_1,a_2, \hdots, a_n$. Writing the circular genome in cycle notation as $(a_1,a_2,\hdots,a_n)$ denotes that gene $a_{i}$ is adjacent on the genome to gene $a_{i+1}$, with gene $a_n$ being adjacent to $a_1$. This approach allowed them to derive many important properties of the breakpoint graph in terms of permutation products, and to give a lower bound on the transposition distance. This work was later extended in \citet{Feijao2013} to include linear chromosomes. \citet{Feijao2013} model a genome as a product of disjoint 2-cycles, and present a formulation of a $k$-break operation as a permutation. The double cut and join model then becomes a special case of the $k$-break operation with $k=2$.

The model of the genome as a product of disjoint 2-cycles and the double cut and join operation as conjugation used by \citet{Feijao2013}, is also employed by us in this paper. The novelty of our work lies in development of this model in a completely algebraic framework, without making use of the existing theory. This allows us to present new proofs for existing results, sometimes leading to a considerable simplification of arguments such as in the result related to counting sorting scenarios (Theorem~\ref{thm:sortingScen}).

This paper is organized as follows. In Section~\ref{sec:dcj} we introduce the double cut and join model, following the exposition of~\citet{Bergeron2006b}.  Section~\ref{sec:genPerms} explains how we can encode a genome as a product of 2-cycles, essentially extending the concept of ``adjacencies" and ``telomeres" from the established theory.  In Section~\ref{sec:algebraicDCJ} we give the major construction of this paper, namely the definition of the DCJ operator as a group action on the genome space.   While the standard definition has several cases depending on the arrangement of the genome, this action requires just two cases.
In order to find a distance formula in this model (Section~\ref{sec:dcjDisNew}), we first need to establish some results about products of involutions, covered in Section~\ref{sec:invlProd}.  The main result, establishing distance in this model, is given in the following theorem:

\medskip
\noindent
\textbf{Main Theorem} (Theorem~\ref{thm:dcjDisTotal})
\emph{Let $G_1$ and $G_2$ be genomes on $n$ regions with corresponding  genomic permutations  $\pi_1$ and $\pi_2$. The DCJ distance between $G_1$ and $G_2$ is given by
\[d_{DCJ}\left(\pi_1,\pi_2 \right)=\frac{1}{2}\left(\newl_t(\pi_2\pi_1)+ n_c\right)\]
where $n_c$ is the number of cycles in the product $\pi_2\pi_1$ which contain two fixed points of ${\pi_1}$ or ${\pi_2}$, and $\newl_t$ is the transposition length.
}

Finally, in Section~\ref{sec:numb.scenarios}, we derive a formula for the number of optimal sorting scenarios between two genomes.  That is, the number of minimal length paths in the Cayley graph of the group generated by the DCJ operators. As our work utilizes many well-known results about permutations, we have collected them in Appendix~\ref{sec:gt_primer} for ease of reference. These results are stated without proofs. A complete treatment may be found in an abstract algebra text such as \citep{herstein2006topics,fraleigh2003first}.

\section{The double cut and join model}
\label{sec:dcj}

In this section we follow the notation of \citet{Bergeron2006b}.  For a more complete introduction to the model and results, see that paper as well as \citet{Yancopoulos2005}.

\subsection{The genome graph}
\label{subsec:genGraph}
Before presenting the double cut and join operator, we first explain how multichromosomal genomes are modeled. In this model, a gene is essentially an oriented section of the DNA and its two ends are called its \emph{extremities}. This of course differs from the biological meaning of the word gene and is closer to what are referred to as ``conserved blocks'' in the rearrangement literature (for example see \citet{hannenhalli1995transforming}, \citet{lin2006exposing}). However, for convenience we will use the words gene and region interchangeably. The extremities of the gene $a$ are denoted $a_t$ and $a_h$ where the subscripts stand for tail and head respectively. 

To represent a genome,  considered as an arrangement of oriented genes, it is sufficient to note which extremities are adjacent on the genome. An extremity that is not adjacent to any other is the end point of a linear section of the genome and is called a \emph{telomere}.
An (unordered) pair of extremities that are adjacent on the genome is referred to as an \emph{adjacency}. For instance, the adjacency $\{a_t,b_h\}$ indicates that the tail of gene $a$ is adjacent to the head of gene $b$ on the genome. Note that an extremity can be adjacent to at most one other extremity.  

Thus, in this model a genome is represented by a partition of the set of extremeties of the genes into subsets of cardinality 1 (telomeres) or 2 (adjacencies). Equivalently, the genome can be viewed as a graph whose vertex set is the set of all adjacencies and telomeres and whose edges are drawn between the extremities of the same gene. Thus every vertex of a genome graph has degree one or two. Figure~\ref{fig:genome} illustrates a genome graph.

\begin{figure*}[ht]

\begin{center}
\begin{tikzpicture}[label distance=1pt]
  \draw [-, thick] (0,0) -- (8,0) node (gene1) {};

  \draw [fill] (0,0) circle [radius=.05] node [label=above:$1_t$,label=below:$(1)$] {};
  \draw [fill] (2,0) circle [radius=.05] node [label=above:${1_h,3_t}$,label=below:${(2,5)}$] {};
  \draw [fill] (4,0) circle [radius=.05] node [label=above:${3_h,2_t}$,label=below:${(6,3)}$] {};
  \draw [fill] (6,0) circle [radius=.05] node [label=above:${2_h,4_t}$,label=below:${(4,7)}$] {};
  \draw [fill] (8,0) circle [radius=.05] node [label=above:$4_h$,label=below:$(8)$] {};
  
  \draw [-, thick] (10,1) arc [radius=1, start angle=90, end angle=-90] ;
  \draw [fill] (10,1) circle [radius=.05] node [label=above:{$5_h,6_t$},label=below:${(10,11)}$] {} ;
  \draw [-, thick] (10,-1) arc [radius=1, start angle=-90, end angle=-270] ;
  \draw [fill] (10,-1) circle [radius=.05] node [label=below:{$5_t,6_h$},label=above:${(9,12)}$] {};	 
  
\end{tikzpicture}
\end{center}
\caption{The genome graph of a genome with one linear chromosome containing genes numbered 1, 2, 3 and 4, and one circular chromosome containing genes numbered 5 and 6. The vertex set of the graph is $\{\{1_t\},\{1_h,3_t\},\{3_h,2_t\},\{2_h,4_t\},\{4_h\},\{5_h,6_t\},\{6_h,5_t\}\}$. Edges are drawn between extremities of the same gene. \newline 
The map $\phi$ (Definition~\ref{def:phi}) maps the set of extremities $\{1_t,1_h,2_t,2_h, \hdots, 6_h\}$ into $ \{1,2,3,4, \hdots, 12 \}$ , with $\phi(1_t)= 1$, $\phi(1_h)=2$ and so on. $1_t$ and $4_h$ are telomeres, hence $1$ and $8$ are fixed points of the permutation $\pi$ (Definition~\ref{def:pi}). $1_h$ is connected to $3_t$ which is captured by the 2-cycle $(2,\ 5)$ in the permutation encoding. The other 2-cycles can be similarly interpreted. The above genome is thus encoded as the permutation $(2,5)(3,6)(4,7)(10,11)(9,12)$.
}
\label{fig:genome}
\end{figure*}
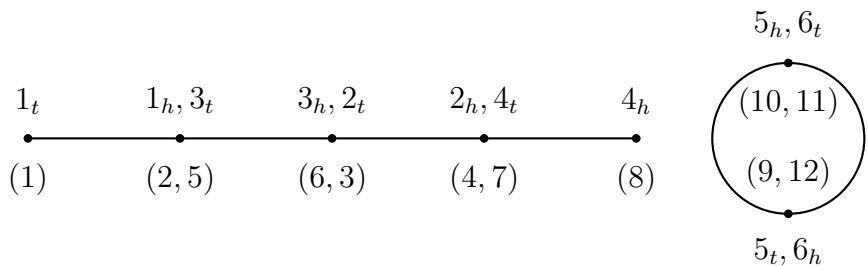

\subsection{The double cut and join operator}
\label{subsec:dcjOp}

The \emph{double cut and join} (DCJ) operator acts on a pair of vertices of a genome graph in one of the following ways:

\begin{enumerate}
\item $\{p,q\},\{r,s\}$ may be changed to $\{p,r\},\{q,s\}$ or $\{p,s\},\{q,r\}$,

\item $\{p,q\},\{r\}$ may be changed to $\{p,r\},\{q\}$ or $\{q,r\},\{p\}$,

\item $\{p,q\}$ may be changed to $\{p\},\{q\}$ or $\{p\},\{q\}$ changed to $\{p,q\}$.
\end{enumerate}

Depending on the vertices that it acts on, the double cut and join operator can simulate the inversion, excision and translocation of a section of the genome as well as fusion and fission of chromosomes. Figure~\ref{fig:dcjExample} presents some examples.
\begin{figure*}[ht]
\centering
 \begin{tikzpicture}[label distance=1pt]
   \matrix[column sep=3em,row sep=3em]{
   \node {(a)};
   &
   \draw [-,thick] (0,0) -- (4,0); 
   \draw [fill] (0,0) circle [radius=.05] node [label=above:{$1_t$}] {};
   \draw [fill] (1.5,0) circle [radius=.05] node [label=above:{$1_h,3_t$}] {};
   \draw [fill] (3,0) circle [radius=.05] node [label=above:{$3_h,2_t$}] {};
   \draw [fill] (4,0) circle [radius=.05] node [label=above:{$2_h$}] {};
   &
   \node {$\longrightarrow$};
   &
   \draw [-,thick] (0,0) -- (4,0); 
   \draw [fill] (0,0) circle [radius=.05] node [label=above:{$1_t$}] {};
   \draw [fill] (1.5,0) circle [radius=.05] node [label=above:{$1_h,3_h$}] {};
   \draw [fill] (3,0) circle [radius=.05] node [label=above:{$3_t,2_t$}] {};
   \draw [fill] (4,0) circle [radius=.05] node [label=above:{$2_h$}] {};\\

   \node {(b)};
   &
   \draw [-,thick] (0,0) -- (4,0); 
   \draw [fill] (0,0) circle [radius=.05] node [label=above:{$1_t$}] {};
   \draw [fill] (1.5,0) circle [radius=.05] node [label=above:{$1_h,3_t$}] {};
   \draw [fill] (3,0) circle [radius=.05] node [label=above:{$3_h,2_t$}] {};
   \draw [fill] (4,0) circle [radius=.05] node [label=above:{$2_h$}] {};
   &
   \node {$\longrightarrow$};
   &
   \draw [-,thick] (0,0) -- (4,0); 
   \draw [fill] (0,0) circle [radius=.05] node [label=above:{$1_h$}] {};
   \draw [fill] (1.5,0) circle [radius=.05] node [label=above:{$1_t,3_t$}] {};
   \draw [fill] (3,0) circle [radius=.05] node [label=above:{$3_h,2_t$}] {};
   \draw [fill] (4,0) circle [radius=.05] node [label=above:{$2_h$}] {};\\

  \node {(c)};
  &
   \draw [-,thick] (0,0) -- (3,0); 
   \draw [fill] (0,0) circle [radius=.05] node [label=above:{$1_t$}] {};
   \draw [fill] (1,0) circle [radius=.05] node [label=above:{$1_h,3_t$}] {};
   \draw [fill] (2,0) circle [radius=.05] node [label=above:{$3_h,2_t$}] {};
   \draw [fill] (3,0) circle [radius=.05] node [label=above:{$2_h$}] {};

   \draw [-,thick] (4,0) -- (5,0);
   \draw [fill] (4,0) circle [radius=.05] node [label=above:{$4_h$}] {};
   \draw [fill] (5,0) circle [radius=.05] node [label=above:{$4_t$}] {};
      
  &
  \node {$\longrightarrow$};
  &
   \draw [-,thick] (0,0) -- (4,0);  
   \draw [fill] (0,0) circle [radius=.05] node [label=above:{$1_t$}] {};
   \draw [fill] (1,0) circle [radius=.05] node [label=above:{$1_h,3_t$}] {};
   \draw [fill] (2,0) circle [radius=.05] node [label=above:{$3_h,2_t$}] {};
   \draw [fill] (3,0) circle [radius=.05] node [label=above:{$2_h,4_h$}] {};
   \draw [fill] (4,0) circle [radius=.05] node [label=above:{$4_t$}] {};\\
 };
 \end{tikzpicture} 

\caption{(a) $\{1_h,3_t\},\{3_h,2_t\}$ is changed to $\{1_h,3_h\} ,\{3_t,2_t\} $ leading to an inversion. (b) $\{1_t\} ,\{1_h,3_t\}$ is changed to $\{1_h\} ,\{1_t,3_t \} $, another inversion. (c) $\{2_h\} ,\{4_h\} $ is changed to $\{2_h,4_h\} $, a fusion. }
\label{fig:dcjExample}
 \end{figure*}
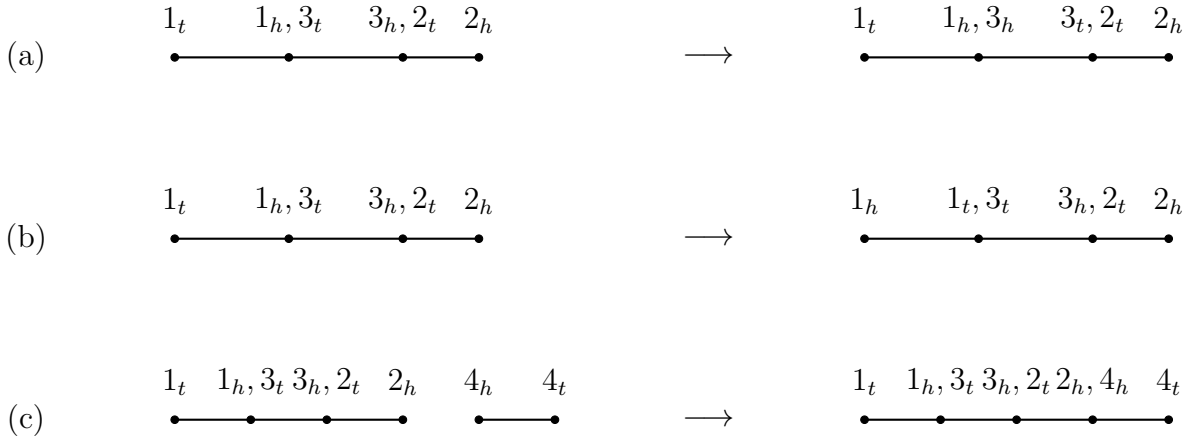	
%---------------------------------------------%
\subsection{The double cut and join distance}
\label{subsec:dcjDis}

The DCJ distance between genomes $G_1$ and $G_2$ is the minimal number of DCJ operations required to change one genome into the other. 

\citet{Bergeron2006b} make use of a graph construct called the ``adjacency graph'' to determine the DCJ distance between two genomes. An \emph{adjacency graph} $AG(G_1,G_2)$ can be drawn for any pair of genomes $G_1$ and $G_2$ defined on the same set of $n$ genes. The vertex set of the graph is the set of all adjacencies and telomeres in $G_1$ and $G_2$. For an adjacency (or telomere) $u \in G_1$ and adjacency (or telomere)  $v \in G_2$, there is an edge in $AG(G_1,G_2)$ between $u$ and $v$ for each gene extremity they have in common.

The vertices of the adjacency graph then have degree either one (at a telomere) or two (at an adjacency), so the graph consists of a set of cycles and a set of paths.  Let $c$ be the number of cycles and $p$ be the number of paths of odd length in $AG(G_1,G_2)$.
\citet{Bergeron2006b} established that the DCJ distance between two genomes can be given in terms of these adjacency graph {statistics} as follows:

\[d_{DCJ}\left(G_1,G_2\right)=n-(c+p/2).\]

\section{Genomes as permutations}
\label{sec:genPerms}

We now present our reformulation of the double cut and join model. We first formalize the notion of a genome on $n$ regions. Let $\{h,t\}$ be the extremities of a gene where $h$ and $t$ denote the head and tail respectively. Let $\mathbf{n}$ be the set $\{1,2,\hdots,n\}$ enumerating the $n$ regions.

\begin{definition}[Extremities]
The Cartesian product $E=\mathbf{n} \times \{h,t\}$ is the set of all extremities of $n$ regions.
\end{definition}

To conform to the notation used earlier in this paper and in previous literature, we will use $i_h$ and $i_t$ to denote the extremities $(i,h)$ and $(i,t)$ giving the head and tail of gene $i$ respectively.

We define a map that assigns numeric labels to the elements of $E$.

\begin{definition}[Assignment map] \label{def:phi} Let $\phi:E \rightarrow \mathbf{2n}$ be defined as follows:
\[\phi(i_t)=2i-1,\]
\[\phi(i_h)=2i.\]
\end{definition}
%use i_e to inidcate (i,e).
\begin{definition}[Genome] \label{def:pi} A \emph{genome} on $n$ regions is a permutation $\pi$ on the set $E$ such that
\[\pi(i)=j \iff \pi(j)=i.\]
\end{definition}

The above definition implies that a genomic permutation is a product of disjoint $2$-cycles. The restriction in the definition of a genome captures the notion of pairing of gene extremities on a genomic strand. Therefore a $2$-cycle in this formulation is an adjacency, and similarly, fixed points of a permutation are telomeres. It is important to note that at this point we use the permutation $\pi$ as a static description of the genome, not as an operation, so that the $2$-cycles can be considered  as synonyms for unordered pairs. Furthermore, this construction with 2-cycles representing adjacencies means that the identity permutation will only arise in the trivial case in which all chromosomes in the genome contain just a single region.

As mentioned in section~\ref{sec:dcj}, the vertex set of the genome graph consists of the adjacencies and telomeres. For every gene $i$, an edge is drawn between the adjacency containing $i_h$ and $i_t$. In writing the genome as a permutation, the $2$-cycles and the fixed points are the adjacencies and the telomeres. The assignment map $\phi$ tells us the correspondence between the gene extremities and the set  $\mathbf{2n}$. Hence the assignment map $\phi$ and the genomic permutation $\pi$ contain all the information that is needed to construct the genome.

\citet{bafna1993genome} introduced the notion of a breakpoint graph for an unsigned permutation. To extend this concept to signed permutations, they transform a signed permutation $\pi$ on $n$ elements to an unsigned permutation $\pi^{\prime}$ on $2n$ elements. This is done by replacing a positive integer $i$ in $\pi$ by $2i-1$ followed by $2i$ in $\pi^{\prime}$ and by replacing a negative integer $-i$ in $\pi$ by $2i$ followed by $2i-1$.  This transformation is precisely the labeling map $\phi$. Reverse orientation of a gene on a chromosome means that the tail of the gene is present after the head of the gene. Thus in the permutation representation, a negative integer $-i$ is replaced by $2i$ (label of $i_h$) followed by $2i-1$ (label of $i_t$).

We use cycle notation to write permutations.  Thus the cycle $(i_1, i_2, i_3,\hdots , i_n)$ in a permutation $\alpha$ means that $\alpha(i_1)=i_2,\alpha(i_2)=i_3$ etc. and $\alpha(i_n)=i_1$ (see Appendix~\ref{sec:gt_primer}).  Figure~\ref{fig:genome} illustrates an example of permutation encoding of a genome on $6$ regions. 

A genome on $n$ regions is a permutation of the set $\mathbf{2n}$ satisfying the constraints in definition~\ref{def:pi}. Lemma~\ref{lemma:genCount} gives an expression for the number of permutations in $S_{2n}$ satisfying this definition.

\begin{lemma}\label{lemma:genCount}
The number of genomes on $n$ regions is given by 
\[\sum_{t=0}^n{\binom{2n}{2t}(2n-2t-1)!!}=\sum_{t=0}^n{\binom{2n}{2t}(2t-1)!!}.\]
\end{lemma}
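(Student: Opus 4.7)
The plan is to interpret Definition~\ref{def:pi} group-theoretically and then count by conditioning on the number of telomeres (fixed points). The defining condition $\pi(i)=j \iff \pi(j)=i$ says exactly that $\pi$ is an involution of $\mathbf{2n}$, i.e.\ a permutation whose disjoint cycle decomposition consists only of $1$-cycles and $2$-cycles. Hence ``number of genomes on $n$ regions'' is the same as ``number of involutions of $S_{2n}$.''

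First I would fix the number of fixed points. Suppose $\pi$ has $k$ fixed points and therefore $(2n-k)/2$ transpositions. Because this count of transpositions must be a nonnegative integer, $k$ is automatically even; write $k=2t$ with $0\le t\le n$. Then the construction of such a $\pi$ factors into two independent choices: (i) select the set of fixed points from $\mathbf{2n}$, which can be done in $\binom{2n}{2t}$ ways; (ii) choose a perfect matching on the remaining $2n-2t$ elements to form the transpositions.

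Next I would justify that the number of perfect matchings of a set of size $2m$ equals the double factorial $(2m-1)!!$. The standard induction works cleanly: the smallest-indexed element has $2m-1$ possible partners, and after fixing that pair the remaining $2m-2$ elements can be matched in $(2m-3)!!$ ways; the base case $m=0$ gives the empty matching, with the convention $(-1)!!=1$. Applying this with $m=n-t$ yields $(2n-2t-1)!!$ matchings in step (ii), so summing the product $\binom{2n}{2t}(2n-2t-1)!!$ over $t=0,\dots,n$ gives the first formula.

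The equality with the second sum is then purely a reindexing: substitute $s=n-t$ and use $\binom{2n}{2t}=\binom{2n}{2n-2t}=\binom{2n}{2s}$ together with $(2n-2t-1)!!=(2s-1)!!$. I do not anticipate a genuine obstacle; the only point requiring a brief comment is the automatic parity of the number of telomeres, which is forced algebraically (and coincides with the biological fact that each linear chromosome contributes exactly two telomeres).
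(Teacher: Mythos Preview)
Your proposal is correct and follows essentially the same approach as the paper: identify genomes with involutions of $S_{2n}$, condition on the (necessarily even) number $2t$ of fixed points, and count the fixed-point-free involutions on the remaining $2n-2t$ elements as $(2n-2t-1)!!$. You even add two small clarifications the paper omits, namely a self-contained justification of the double factorial count and the explicit reindexing $s=n-t$ that establishes the equality of the two sums.
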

\begin{proof}
Let the set of all genomes on $n$ regions be $\Gamma_n$. Each genome is a permutation of the $2n$extremities $E$, and hence $\Gamma_n$ is a subset of the symmetric group $S_{2n}$. The cardinality of $\Gamma_n$ can be determined as follows. 

Each genomic permutation can have an even number of fixed points, since it is a product of disjoint 2-cycles and 1-cycles acting on a set of even cardinality ($2n$). This also follows from the fact that fixed points are telomeres, and a genome must have an even number of telomeres.

Let the number of fixed points be $2t$. The remaining $2n-2t$ elements must be paired off with each other. Each such pairing of the $2n-2t$ elements defines an involution  in the symmetric group $S_{2n-2t}$ that does not have any fixed points.  An involution is an element of order $2$ i.e., $\pi$ is an involution if $\pi^2$ is the identity permutation.

The number of such involutions is $(2n-2t-1)!!$ \citep[pp. 15-16]{Stanley1999} where the double factorial function is the product of odd numbers i.e. $(2k-1)!!=\prod_{i=1}^k(2i-1)$. Therefore the cardinality of $\Gamma_n$ is given by
\[\left\vert \Gamma_n \right\vert = \sum_{t=0}^n{\binom{2n}{2t}(2n-2t-1)!!}=\sum_{t=0}^n{\binom{2n}{2t}(2t-1)!!}.\]
\qed

\end{proof}
The number of genomes is already almost a billion for 9 regions.  The astute observer will note that this number is also the number of tableaux on $2n$ elements, with a correspondence given by the Robinson-Schensted algorithm (see for instance \cite{fulton1997young}).  The first nine numbers in the sequence are shown in Table~\ref{tab:tableux}. 
\begin{table}
\begin{center}
\begin{tabular}{|r|r|}
\hline
\#regions & \#genomes\\
\hline
1& 2\\
\hline
2& 10\\
\hline
3& 76\\
\hline
4& 764\\
\hline
5& 9496\\
\hline
6& 140152\\
\hline
7& 2390480\\
\hline
8& 46206736\\
\hline
9& 997313824\\
\hline
\end{tabular}
\end{center}\caption{The number of genomes on $n$ regions (also the number of tableaux on $2n$ elements).}\label{tab:tableux}
\end{table}
%=============================================%
\section{The DCJ operator as an action on a permutation}
\label{sec:algebraicDCJ}
%=============================================%

In this section we define an algebraic version of the DCJ operator acting on the set $\Gamma_n$ of genomes on $n$ regions, and show that it is an involution.  Appendix~\ref{sec:gt_primer} contains a summary of some results on symmetric groups that may be useful for reference in this section and the next.

As explained in the previous section, the genome is modeled as a set of unordered pairs of gene extremities (adjacencies) and single gene extremities (telomeres). A DCJ operation as defined in \cite{Bergeron2006b} swaps gene extremities between two pairs (i.e. adjacencies) or a pair and a singleton, as described in Section~\ref{subsec:dcjOp}.

Hence, the possible scenarios are that the two gene extremities being swapped are: adjacent to each other on the genome; both  involved in different adjacencies; one of them is a telomere and the other in an adjacency; or both of them are telomeres.  When a DCJ operation acts on a pair of gene extremities that form an adjacency, it splits them, producing two telomeres, and conversely when it acts on two telomeres, it combines them into an adjacency. In the two other cases, an extremity is swapped.

Thus, in the permutation representation, the DCJ operation swapping $i$ and $j$ changes:

\begin{align*}
(i,k)(j,l)\quad&\longrightarrow\quad(j,k)(i,l),\\
(i,k)(j)\quad&\longrightarrow\quad(j,k)(i),\\
(i,j)\quad&\longrightarrow\quad(i)(j),\quad\text{ and}\\
(i)(j)\quad&\longrightarrow\quad(i,j). 
\end{align*}

With this in mind, for $i,j \in \mathbf{2n}$, and $i \neq j$ we define the double cut and join operator $D_{ij}$ acting on the set of genomes $\Gamma_n$, $D_{ij}: \Gamma_n \rightarrow \Gamma_n $ as follows:
\begin{definition}[Set of fixed points] Let $\pi$ be a permutation of $\mathbf{2n}$, then the set of \emph{fixed points} of $\pi$ is defined by
\[ F_{\pi}:=\{i \mid i \in \mathbf{2n}, \pi(i)=i\}. \]
\end{definition}

\begin{definition}[Algebraic double cut and join operator]\label{d:Dij}
For a permutation $\pi$ representing a genome, set % as described above
\[
D_{ij}(\pi): = 
	\begin{cases}
 	  (i,j)\pi & \text{if }i,j \in F_{\pi}\text{ or }\pi(i)=j,\text{ and}\\	
	  (i,j)\pi(i,j) & \text{otherwise. }
	\end{cases}
\]

\end{definition}

Therefore, in algebraic terms, the double cut and join operators as defined above are conjugations or left actions by $2$-cycle involutions.
To distinguish this formulation from the standard, we will call $D_{ij}$ the \emph{algebraic} double cut and join operator.

The vertex set of the genome graph consists of unordered 2-tuples and singletons from the set of gene extremities $E$.  The map $\phi$ simply  relabels  elements of the set $E$ with the labels from the set $\mathbf{2n}$. Therefore we can consider the vertex set of the genome graph to consist of unordered 2-tuples and singletons from $\mathbf{2n}$. A genomic permutation $\pi$ is a permutation on the set $\mathbf{2n}$ satisfying the constraint $\pi(i)=j \iff \pi(j)=i$.
 
 Let $\rho$ be the map that writes the vertex set of the genome graph $G$ as permutation $\pi$. % 
 \[\rho(\{i,j\})=(i,j) \textit{ and } \rho(\{i\})=(i).\]
 
 Let $\Dbg_{\{i,j\}}(G)$ be the DCJ operator acting on the extremities $i$ and $j$ of genome $G$ and let $D_{ij}(\pi)$ be the operator acting on the permutation $\pi$. The remarks motivating the definition of  algebraic DCJ operator informally explain why we can expect the graph-theoretic and the algebraic operators to be equivalent. That is, the diagram in Figure~\ref{fig:commutative} commutes.
 We now prove this statement formally.
 
 \begin{figure}[h]
 \begin{center}
 \begin{tikzpicture}
 \matrix (m) [matrix of math nodes,row sep=3em,column sep=4em,minimum width=2em]
 {
 G & \pi \\
 G^\prime & \pi^\prime \\ };
 \path[-stealth] (m-1-1) edge node [left] {$\Dbg_{\{i,j\}}$} (m-2-1);
 \path[-stealth] (m-2-1) edge[<->] node [below] {$\rho$} (m-2-2);
 \path[-stealth] (m-1-2) edge node [right] {$D_{ij}$} (m-2-2);
 \path[-stealth] (m-1-1) edge[<->] node [above] {$\rho$} (m-1-2);
 \end{tikzpicture}
  \end{center}
 \caption{Rewriting genome $G$ as the permutation $\pi$, and acting on $\pi$ by $D_{ij}$ gives the same result as acting on $G$ by the DCJ operator and then rewriting the result as permutation $\pi^{\prime}$. }
 \label{fig:commutative}
 \end{figure}

\begin{lemma} For all genomes $G$, 
	 \[ \rho  \left(\Dbg_{\{i,j\}}(G)\right) = D_{ij} \left(\rho(G) \right).\]
\end{lemma}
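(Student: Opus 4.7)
The plan is to fix a genome $G$, set $\pi=\rho(G)$, and prove the identity by a case analysis on the vertices of $G$ containing $i$ and $j$. Section~\ref{subsec:dcjOp} describes the graph-theoretic operator $\Dbg_{\{i,j\}}$ in four mutually exclusive cases: (i) $\{i,j\}$ is an adjacency of $G$; (ii) $i$ and $j$ are both telomeres; (iii) exactly one of $i,j$ is a telomere and the other lies in an adjacency; (iv) $i$ and $j$ lie in two distinct adjacencies. On the algebraic side, Definition~\ref{d:Dij} has only two branches, so the bulk of the argument is in matching each graph-theoretic case with the correct branch and verifying that the two sides produce the same set of adjacencies and telomeres.

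First I would show that the branch $D_{ij}(\pi)=(i,j)\pi$ is triggered exactly by cases (i) and (ii): in (i) we have $\pi(i)=j$, in (ii) we have $i,j\in F_\pi$, and in (iii)--(iv) neither condition holds, so there $D_{ij}(\pi)=(i,j)\pi(i,j)$. For case (i), writing $\pi$ as a product of disjoint $2$-cycles containing the factor $(i,j)$, left-multiplication by $(i,j)$ cancels this factor and leaves $i,j$ as fixed points of $(i,j)\pi$; this is precisely $\rho$ applied to the genome obtained by splitting the adjacency $\{i,j\}$ into the telomeres $\{i\}$ and $\{j\}$. Case (ii) is the reverse: $i,j$ are fixed by $\pi$, so $(i,j)\pi$ introduces the new $2$-cycle $(i,j)$, matching the fusion of telomeres into the adjacency $\{i,j\}$ on the genome-graph side.

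For cases (iii) and (iv) I would compute the conjugation $(i,j)\pi(i,j)$ pointwise on the handful of elements involved. In case (iv), assume $\pi$ contains the disjoint $2$-cycles $(i,k)$ and $(j,l)$ with $i,j,k,l$ distinct; evaluating $(i,j)\pi(i,j)$ on each of $i,j,k,l$ and checking that it fixes every other element shows that these two $2$-cycles are replaced by $(i,l)$ and $(j,k)$, which corresponds on the graph side to replacing the adjacencies $\{i,k\},\{j,l\}$ by $\{i,l\},\{j,k\}$. Case (iii) is essentially the same calculation, with the $2$-cycle involving the telomere collapsing to a fixed point while the other $2$-cycle gets its endpoint swapped.

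The main obstacle is not conceptual but bookkeeping: I have to make sure that the graph-theoretic case split really does line up with the two algebraic branches, and that the particular outcome selected by $(i,j)\pi(i,j)$ in cases (iii) and (iv) agrees with the outcome of $\Dbg_{\{i,j\}}$ among the two possibilities allowed by the definition in Section~\ref{subsec:dcjOp}. Verifying this consistency is the only subtle point; once settled, each of the four cases reduces to a short, direct computation.
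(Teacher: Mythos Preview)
Your approach is essentially the same as the paper's: a four-case analysis according to how $i$ and $j$ sit in the vertex set of $G$, matching each case to the appropriate branch of Definition~\ref{d:Dij} and computing both sides directly. The paper presents the four cases in a different order and writes out each computation in one line, but the content is identical; your additional remark about verifying which of the two possible outcomes in Section~\ref{subsec:dcjOp} is selected by $\Dbg_{\{i,j\}}$ is a reasonable point of care that the paper leaves implicit (the choice is determined once the pair $i,j$ to be swapped is fixed).
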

\begin{proof}	
We prove this by considering all the four cases in the definition of the operator $\Dbg$ (Section~\ref{subsec:dcjOp}).

Case 1. $i$ and $j$ are in separate 2-tuples $\{i,k\},\{j,l\} \in G$.

\[\rho  \left(\Dbg_{\{i,j\}}(G)\right)=\rho\left(\Dbg(\{i,k\},\{j,l\})\right)=\rho \left(\{j,k\},\{i,l\}\right)=(j,k)(i,l).\]

\[D_{ij} \left( \rho(\{i,k\},\{j,l\} )  \right)= D_{ij} \left((i,k)(j,l)\right)=(j,k)(i,l).\]

Case 2. Exactly one of the $i,j$ is in a 2-tuple $\{i,k\},\{j\} \in G$.

\[\rho  \left(\Dbg_{\{i,j\}}(G)\right)=\rho\left(\Dbg(\{i,k\},\{j\})\right)=\rho(\{(j,k)\},\{i\})=(j,k)(i).\]

\[D_{ij} \left( \rho(\{i,k\},\{j\} )  \right)= D_{ij} \left((i,k)(j)\right)=(j,k)(i).\]

Case 3. None of the $i,j$ is in a 2-tuple. $\{i\},\{j\} \in G$.
\[\rho  \left(\Dbg_{\{i,j\}}(G)\right)=\rho  \left(\Dbg_{\{i,j\}}(\{i\},\{j\})\right)=\rho(\{i,j\})=(i,j).\]

\[D_{ij} \left( \rho(\{i\},\{j\} )  \right)= D_{ij} \left((i)(j)\right)=(i,j).\]

Case 4. $i,j$ are in the same 2-tuple in $G$ $\{i,j\} \in G$
\[\rho  \left(\Dbg_{\{i,j\}}(G)\right)=\rho  \left(\Dbg_{\{i,j\}}(\{i,j\})\right)=\rho(\{i\},\{j\})=(i)(j).\]

\[D_{ij} \left( \rho(\{i,j\} )  \right)= D_{ij} \left((i,j)\right)=(i)(j).\]

Thus in all cases, $\rho  \left(\Dbg_{\{i,j\}}(G)\right) = D_{ij} \left( \rho(G)  \right)$.

\qed
\end{proof}

The following lemma shows that the algebraic DCJ operator is an involution.

\begin{lemma}
$D_{ij}^2(\pi)=\pi$ {for all } $\pi \in \Gamma_n$ and $i,j \in \mathbf{2n}$.
\end{lemma}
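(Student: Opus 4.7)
The plan is to split into the two branches of Definition~\ref{d:Dij} and check that $D_{ij}$ acts as an involution by verifying in each branch that a second application of $D_{ij}$ undoes the first. The key observation is that the two branches of the definition are tailored to cancel each other: left-multiplication by the transposition $(i,j)$ in the first branch toggles between the subcases $\{i,j\}\subseteq F_\pi$ and $\pi(i)=j$, while conjugation by $(i,j)$ in the second branch preserves that branch. I would organize the proof around showing these invariance properties.

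First I would handle the first branch. If $i,j \in F_\pi$, a direct computation gives $((i,j)\pi)(i) = (i,j)(i) = j$ and $((i,j)\pi)(j)=i$, so that $D_{ij}(\pi)$ contains $(i,j)$ as a $2$-cycle; hence the first branch again applies to $D_{ij}(\pi)$, this time via the condition $\pi'(i)=j$, and one more left-multiplication by $(i,j)$ cancels the first. Symmetrically, if $\pi(i)=j$, then $((i,j)\pi)(i) = (i,j)(j) = i$ and similarly $((i,j)\pi)(j) = j$, so $D_{ij}(\pi)$ has $i,j$ both fixed, and again the first branch applies and cancels.

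For the second branch I would need the claim that if $\pi$ falls in the "otherwise" case then so does $\pi' := (i,j)\pi(i,j)$. Concretely I would verify: (a) $\pi'(i) = (i,j)\pi(j)$ equals $j$ only when $\pi(j)=i$, which is excluded; and (b) $\pi'(i) = i$ and $\pi'(j) = j$ would force $\pi(j) = j$ and $\pi(i) = i$, which is also excluded. Once $\pi'$ is confirmed to lie in the second branch, $D_{ij}(\pi') = (i,j)\pi'(i,j) = (i,j)(i,j)\pi(i,j)(i,j) = \pi$ using $(i,j)^2 = e$.

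The only step with any friction is the bookkeeping between the two subcases of the first branch: one must notice that left-multiplication by $(i,j)$ takes a configuration in which $i,j$ are both fixed to one in which $(i,j)$ is a $2$-cycle, and vice versa, so that the first branch is used twice (not the second) in these cases. Once this is observed, the rest reduces to the two one-line algebraic identities $(i,j)(i,j)\pi = \pi$ and $(i,j)(i,j)\pi(i,j)(i,j) = \pi$.
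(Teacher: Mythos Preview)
Your proposal is correct and follows essentially the same approach as the paper: both arguments verify that applying $D_{ij}$ lands you in the same branch of the definition (with the first branch toggling between its two subcases and the second branch preserved), after which the involution property follows from $(i,j)^2=e$. Your version simply makes the branch-preservation checks explicit where the paper asserts them in words.
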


\begin{proof}

For any permutation $\pi \in \Gamma_n$, if $i,j$ are not both telomeres and do not form an adjacency of $\pi$, then the same holds in $D_{ij}(\pi)=(i,j)\pi(i,j)$. Similarly if $i$ and $j$ are both telomeres in $\pi$ then they form an adjacency in $D_{ij}(\pi)$, and if they are in an adjacency in $\pi$, they will both be telomeres in $D_{ij}(\pi)$. Thus acting by $D_{ij}$ on $D_{ij}(\pi)$ will cause the same condition in the definition of $D_{ij}$ to be invoked which was invoked when $D_{ij}$ acted on $\pi$.

The operation in each case is an involution, hence $D_{ij}^2(\pi)=\pi$ {for all } $\pi \in \Gamma_n$.
\qed
\end{proof}

At this point, we make the following note about the notation employed in the remainder of this paper. Permutations are functions where the operand is written on the right. So for example, $\pi(i)$ is the permutation $\pi$ acting on $i$. In line with this, permutation multiplication is done from right to left. 

A $2$-cycle is written as $(i,j)$. We will not in general write cycles of length $1$, except for emphasis.
%=============================================%
\section{Products of involutions}
\label{sec:invlProd}
%=============================================%
In this section we prove some results about products of involutions. We make use of these results in Section~\ref{sec:dcjDisNew} to determine the DCJ distance between genomic permutations $\pi_1$ and $\pi_2$.

\begin{lemma}
\label{lemma:fpfree}
Let $\al$ and $\beta$ be involutions acting on the set $\mathbf{2n}=\{1,2,\hdots,2n\}$.  If $F_\al=F_\beta=\varnothing$, then
\begin{enumerate}
\item For any $i \in \mathbf{2n}$, $i$ and $\al(i)$ are in different cycles of $\beta \al$. Similarly $i$ and $\beta(i)$ are in different cycles of $\beta \al$.
\item $\beta \al$ has an even number of cycles of length $k$ for any $k \in \mathbb{N}$.
\end{enumerate}
\end{lemma}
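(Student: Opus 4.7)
The plan is to let $\gamma = \beta\alpha$ and to exploit the key identity $\alpha\gamma\alpha = \alpha\beta\alpha\alpha = \alpha\beta = \gamma^{-1}$, which is immediate from $\alpha^2 = \beta^2 = 1$. By symmetry $\beta\gamma\beta = \gamma^{-1}$ as well. Because conjugation preserves cycle structure, $\alpha$ sends every cycle of $\gamma$ to a cycle of $\gamma^{-1}$ of the same length; but the cycles of $\gamma^{-1}$ are merely the cycles of $\gamma$ traversed backwards, so $\alpha$ induces a length-preserving involution on the set of cycles of $\gamma$, and similarly for $\beta$. This observation is the spine of both parts.

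For part (1) I will argue by contradiction, proving the slightly stronger statement that no single cycle of $\gamma$ can be setwise fixed by $\alpha$. Suppose the cycle $C = (i_1, i_2, \ldots, i_k)$ is $\alpha$-stable, with $\alpha(i_1) = i_j$ for some $j \in \{2,\ldots,k\}$. Applying $\alpha\gamma = \gamma^{-1}\alpha$ inductively should produce the closed forms
\[
\alpha(i_m) = i_{j+1-m}, \qquad \beta(i_m) = \gamma\alpha(i_m) = i_{j+2-m},
\]
with indices interpreted mod $k$. The hypotheses $F_\alpha = F_\beta = \varnothing$ then translate into the two conditions $2m \not\equiv j+1 \pmod k$ and $2m \not\equiv j+2 \pmod k$ for every $m$.

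A short parity case split finishes (1): if $k$ is odd then $2$ is invertible mod $k$, so either congruence is solvable for some $m$ and we obtain a fixed point of $\alpha$ or $\beta$, contradicting our hypotheses; if $k$ is even then the image of $m \mapsto 2m \pmod k$ is exactly the set of even residues, so the $\alpha$-condition forces $j+1$ to be odd (hence $j$ even) while the $\beta$-condition forces $j+2$ to be odd (hence $j$ odd)—impossible. The analogous argument with $\alpha$ and $\beta$ interchanged dispatches the claim about $\beta(i)$.

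Part (2) then comes almost for free. By (1), the $\alpha$-induced involution on the set of cycles of $\gamma$ has no fixed point: any $\alpha$-fixed cycle would contain both some $i$ and $\alpha(i)$, contradicting (1). Since this involution preserves cycle length, the cycles of length $k$ are partitioned into $\alpha$-orbits of size exactly two, so their number is even. I expect the main obstacle to be the bookkeeping in (1): verifying the closed-form expressions for $\alpha(i_m)$ and $\beta(i_m)$ by a careful induction mod $k$ and then extracting the parity contradiction on $j$. Once those indices are nailed down, both conclusions follow cleanly.
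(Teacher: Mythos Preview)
Your argument is correct, and it takes a genuinely different route from the paper's. The paper argues part~(1) by supposing $\alpha(1)=(\beta\alpha)^r(1)$ for some $r$ and then manipulating this equation directly: a parity split on $r$ (even versus odd) produces an explicit element $(\beta\alpha)^{r/2}(1)$ or $(\alpha\beta)^{(r+1)/2}\alpha(1)$ that is fixed by $\alpha$ or by $\beta$, contradicting $F_\alpha=F_\beta=\varnothing$. Part~(2) is then handled by a separate computation: the paper writes out the cycle containing $\alpha(1)$ and checks by hand that it has the same length as the cycle containing $1$. Your approach instead starts from the structural identity $\alpha\gamma\alpha=\gamma^{-1}$ (with $\gamma=\beta\alpha$), which immediately tells you that $\alpha$ permutes the supports of the cycles of $\gamma$ in a length-preserving way; you then parametrize a putative $\alpha$-stable cycle, obtain the closed forms $\alpha(i_m)=i_{j+1-m}$ and $\beta(i_m)=i_{j+2-m}$, and run a parity split on the cycle length $k$ rather than on the exponent $r$. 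The payoff is that part~(2) becomes a one-line corollary: the involution $\alpha$ acts on the set of cycles of $\gamma$ without fixed points, so cycles of each length pair off. The paper's computation is slightly more elementary in that it never invokes conjugation or an action on cycles, but your version makes the pairing in~(2) transparent and avoids the second explicit length calculation.
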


\begin{proof} 
(1) The cycle in $\beta \al$ containing $1$ is of the form

\[\left( 1,\beta \al(1),\beta \al\beta\al(1),\hdots ,(\beta \al)^k(1)\right),\]

where $k$ is the smallest positive integer such that $(\beta \al)^{k+1}(1)=1$, therefore the length of this cycle is $k+1$.
We claim that $\al(1) \notin \{1,\beta \al(1),\hdots, (\beta \al)^k(1)\}$.

Suppose that $\al(1)=(\beta\al)^r(1)$ for some $r$. If $r$ is even then
\begin{align*}
\al(1)&=(\beta\al)^r(1) \\
&= (\beta\al)^{(r/2)-1}\beta\al(\beta\al)^{r/2}(1).
\end{align*}
By multiplying on the left both sides by $(\al\beta)^{(r/2)-1}$, (the inverse of $(\beta\al)^{(r/2)-1}$), we get 
\[(\al\beta)^{(r/2)-1}\al(1)=\beta\al(\beta\al)^{r/2}(1),\]
and multiplying by $\beta$ yields 
\begin{align*}
\beta(\al\beta)^{(r/2)-1}\al(1)&=\al(\beta\al)^{r/2}(1)\\
(\beta\al)^{r/2}(1)&=\al\left((\beta\al)^{r/2}(1)\right).
\end{align*}

In other words, $(\beta\al)^{r/2}(1)\in F_\al$, contradicting the assumption that $F_\al=\varnothing$. 
Similarly, if $r$ is odd, we find that $\al(1)=(\beta\al)^{r}(1)$ implies that $(\al\beta)^{(r+1)/2}\al(1)$ is a fixed point of $\beta$, another contradiction.

Thus $\al(1) \notin \{1,\beta \al(1),\hdots, (\beta \al)^k(1)\}$.

(2) Write the cycle in $\beta \al $ containing $\al(1)$ as

$\left( \al(1),\beta(1),\beta\al\beta(1),\hdots ,(\beta\al)^s\beta(1) \right)$,

where $s$ is the smallest positive integer such that $(\beta\al)^{s+1}\beta(1)=\al(1)$. Then multiplying both sides of the equation by $\beta (\al\beta)^{s+1}$ (the inverse of $(\beta\al)^{s+1}\beta$) we obtain
\begin{align*}
1&= \beta (\al\beta)^{s+1} \al(1) \\
&= (\beta\al)^{s+2}(1). 
\end{align*}

That is, $(\beta\al)^{s+2}(1)=1$.
But $(\beta\al)^{k+1}=1$ and minimality of $k$ and $s$ imply 
%$k$ is the smallest number for which this happens. Hence 
$s=k-1$. Thus, the length of the cycle containing $\al(1)$ is $k+1$, which is the same as the length of the cycle containing $1$.
Since the same argument holds for any $i \in \mathbf{2n}$ there will be an even number of cycles of any given length in $\beta\al$.
\qed
\end{proof}

\begin{lemma}\label{lemma:withfp}
Let $\al$ and $\beta$ be permutations on the set $\mathbf{2n}$ such that $\al$ and $\beta$ are involutions. 
Then a cycle in $\beta \al$ contains at most $2$ points from $F_{\al} \cup F_{\beta}$.
\end{lemma}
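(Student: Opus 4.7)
The plan is to parametrize the cycle and translate membership in $F_\al$ and $F_\beta$ into simple divisibility conditions on the position along the cycle.

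Write $C=(x_0,x_1,\dots,x_{k-1})$ with $x_s=(\beta\al)^s(x_0)$ (indices mod $k$). If some point of $C$ lies in $F_\al\cap F_\beta$, say $x_0$, then $\beta\al(x_0)=\beta(x_0)=x_0$, so $C=\{x_0\}$ is a singleton and the claim holds trivially. Assume from now on that $C\cap F_\al\cap F_\beta=\varnothing$.

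Since $(\beta\al)^{-1}=\al\beta$ has exactly the same cycles as $\beta\al$ (just traversed in the reverse direction), and since $F_\al\cup F_\beta$ is symmetric in $\al,\beta$, I may after renaming assume that some $x_s\in F_\al$, and by choosing this as the basepoint, that $x_0\in F_\al$. The two algebraic identities I will use are
\[\al(\beta\al)^s=(\al\beta)^s\al \quad \text{and}\quad \beta(\beta\al)^s=\al(\beta\al)^{s-1}\ (s\ge 1),\]
the second following from a one-line induction from $\beta^2=e$. Combining these with $\al(x_0)=x_0$ and $(\al\beta)^s=(\beta\al)^{-s}$, the conditions for $x_s$ to be a fixed point reduce to
\begin{align*}
x_s\in F_\al &\iff (\beta\al)^{2s}(x_0)=x_0 \iff k\mid 2s,\\
x_s\in F_\beta &\iff (\beta\al)^{2s-1}(x_0)=x_0 \iff k\mid 2s-1.
\end{align*}

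A parity argument then finishes the proof. If $k$ is even, the $F_\beta$ condition is vacuous (since $2s-1$ is odd), and the $F_\al$ condition admits only $s=0$ and $s=k/2$ in the range $\{0,\dots,k-1\}$. If $k$ is odd, the $F_\al$ condition admits only $s=0$ and the $F_\beta$ condition admits only $s=(k+1)/2$. In either case at most two values of $s$ yield an element of $F_\al\cup F_\beta$, giving $|C\cap(F_\al\cup F_\beta)|\le 2$. The main subtlety is the symmetry reduction at the start: one must record that if $C$ meets $F_\al\cup F_\beta$ only inside $F_\beta$, the same argument applied to the cycle viewed as a cycle of $(\beta\al)^{-1}=\al\beta$ covers that case, because swapping $\al$ and $\beta$ preserves both $F_\al\cup F_\beta$ and the cycle-set of $\beta\al$.
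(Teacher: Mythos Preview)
Your proof is correct and follows essentially the same route as the paper's own argument: both pick a basepoint in $F_\al$, reduce the conditions $x_s\in F_\al$ and $x_s\in F_\beta$ to the congruences $k\mid 2s$ and $k\mid 2s-1$ respectively, and then count solutions by parity of $k$. Your presentation is somewhat more streamlined---you go directly to the congruence conditions and count, whereas the paper first exhibits a specific second fixed point and then argues any hypothetical third one must coincide with it---but the underlying mechanism is identical.
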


\begin{proof}
If $F_{\al} \cup F_{\beta} = \varnothing$, then any cycle on $\beta\al$ contains $0$ elements of $F_{\al} \cup F_{\beta}$ and the statement is vacuously true.

Suppose then that $F_{\al} \cup F_{\beta} \neq \varnothing$.
Suppose $1 \in F_{\al}$; that is, $\al(1)=1$. If $1 \in F_{\beta}$, a similar argument would apply.

The cycle containing $1$ in $\beta \al$ is of the form
\[
\left( 1,\beta \al(1),\beta \al\beta\al(1),\hdots, (\beta \al)^k(1) \right)
\]
where $k$ is the smallest positive integer for which $(\beta \al)^{k+1}(1)=1$.  As in the proof of Lemma~\ref{lemma:fpfree}, this cycle contains $\al(1)=1=(\beta\al)^{k+1}(1)$.  
We have argued in the proof of Lemma~\ref{lemma:fpfree} that if $k+1$ is odd then
\[
(\beta \al)^{k+1}(1)=1=\al(1) \implies (\beta\al)^{(k+2)/2}(1) \in F_{\beta}
\]
and if $k+1$ is even then
\[
(\beta\al)^{k+1}(1)=1=\al(1) \implies (\beta\al)^{(k+1)/2}(1) \in F_{\al}.
\]

That is, if $1 \in F_{\al}$ then the cycle containing $1$ contains at least one other point from $F_{\al} \cup F_{\beta}$, namely $(\beta\al)^{(k+2)/2}(1)$ if the length of the cycle is odd, and $(\beta\al)^{(k+1)/2}(1)$ if it is even. 

Suppose that this cycle contains another point $i \in F_{\al} \cup F_{\beta}.$
We claim that $i$ must be one of the points identified above. Since $i$ is in the cycle, for some positive integer $s$, $i=(\beta\al)^s(1)$. Let $s$ be the smallest such integer.
  
If $i \in F_{\al}$; that is, $\al((\beta\al)^s(1))=(\beta\al)^s(1)$.  We then have
\begin{align*}
1&=(\al\beta)^s\al(\beta\al)^s(1) && \text{ since }((\beta\al)^s)^{-1}=(\al\beta)^s \\
&=\al(\beta\al)^{2s}(1).&&
\end{align*}
But $\al(1)=1$, so acting on both sides by $\al$ we have that $(\beta\al)^{2s}(1)=1$.

If $i \in F_{\beta}$, so that $\beta((\beta\al)^s(1))=(\beta\al)^s(1)$, we obtain $(\beta\al)^{2s-1}(1)=1$.

But since $k+1$ is the minimal integer for which $(\beta\al)^{k+1}(1)=1$, and $s$ is also minimal, it follows  $2s=k+1$ or $2s-1=k+1$ according to whether $i \in F_{\alpha}$ or $i \in F_{\beta}$. That is,

\[
s=\begin{cases}
(k+1)/2 & \text{ if } i \in F_{\alpha}, \\
(k+2)/2 & \text{ if } i \in F_{\beta}. 
\end{cases}
\]
Hence $i$ is one of the points identified above unless the two points are the same which will happen if,
\[k+1=\frac{k+1}{2} \text{ or } k+1=\frac{k+2}{2}.\]

The first equation does not have any nonnegative solution. The only nonnegative integer satisfying the second condition is $k=0$. If $k$ is $0$ i.e., $(\beta \al)(1)=1$ then since $1 \in F_{\al}$, it follows that $\beta(1)=1$ and hence $1$ is a fixed point of $\beta$ as well. In this case, the cycle of $\al\beta$ containing $1$ will be of length $1$ and hence contains a single point from $F_{\al} \cup F_{\beta}$. 

Thus, a cycle of $\beta \al$ contains no fixed points if there are no fixed points in $\beta$ and $\al$. 

A cycle of length $1$ contains a point $i$ from $F_{\al} \cup F_{\beta}$ if $i$ is fixed in both $\al$ and $\beta$, that is if $i \in F_{\al} \cap F_{\beta}$. 

If $F_{\al} \cap F_{\beta} = \emptyset$, then a cycle of $\beta\al$ contains exactly two points from $F_{\al} \cup F_{\beta}$.   
\qed 
\end{proof}
\citet{Petersen2013} also investigate the nature of the product of involutions and prove similar results. Their results are stated in terms of the structure of an involution product graph.

\section{Determining the DCJ distance}
\label{sec:dcjDisNew}

\subsection{Subpermutations and the link to transposition distance}

We define a binary relation on $\mathbf{2n}$ which will allow us to separate out the different components of a pair of genomic permutations, each of which we will then be able to sort independently of the others.

\begin{definition}
Let $\pi_1$ and $\pi_2$ be genomic permutations on $n$ regions. That is, $\pi_1$ and $\pi_2$ are involutions on the set $\mathbf{2n}$. Define the binary relation $\sim$ on $\mathbf{2n}$ by setting
\[i \sim j \iff (\pi_2 \pi_1)^k(i)=j \text{ or } \pi_1(\pi_2 \pi_1)^k(i)=j \text{ for some } k \in \mathbb{Z}.\]
\end{definition} 

The cycles of $\pi_1 \pi_2$ and $\pi_2 \pi_1$ are the same as sets (they are inverses of each other in $S_{2n}$), hence the binary relation $\sim$ defined for the pair $\pi_1,\pi_2$ would be the same as defined for the pair $\pi_2,\pi_1$. Therefore, without any ambiguity $\sim$ can be defined for an unordered pair of genomic permutations.  

\begin{lemma}
$\sim$ is an equivalence relation on $\mathbf{2n}$.
\end{lemma}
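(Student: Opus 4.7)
The plan is to verify reflexivity, symmetry, and transitivity directly. The conceptual reason these properties hold is that the set
\[
H := \{(\pi_2\pi_1)^k : k\in\mathbb{Z}\}\cup\{\pi_1(\pi_2\pi_1)^k : k\in\mathbb{Z}\}
\]
is exactly the subgroup $\langle \pi_1,\pi_2\rangle \le S_{2n}$ generated by the two involutions, and $i\sim j$ holds iff $j$ lies in the $H$-orbit of $i$. Since orbits of a group action partition the underlying set, $\sim$ is automatically an equivalence relation. I would, however, present the argument by checking the three axioms by hand, so the proof does not depend on first recognising $H$ as a subgroup.

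Reflexivity is immediate: take $k=0$, giving $(\pi_2\pi_1)^0(i)=i$. The single technical ingredient for the other axioms is the commutation identity
\[
\pi_1(\pi_2\pi_1)^k=(\pi_2\pi_1)^{-k}\pi_1 \qquad (k\in\mathbb{Z}),
\]
which follows from $\pi_1^{-1}=\pi_1$, $\pi_2^{-1}=\pi_2$ (hence $(\pi_2\pi_1)^{-1}=\pi_1\pi_2$) by a short induction on $|k|$. This identity is exactly the tool needed to move a $\pi_1$ past a power of $\pi_2\pi_1$ at the cost of negating the exponent.

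For symmetry, suppose $i\sim j$. If $(\pi_2\pi_1)^k(i)=j$, apply $(\pi_2\pi_1)^{-k}$ to both sides, and $j\sim i$ via the first form. If $\pi_1(\pi_2\pi_1)^k(i)=j$, apply $\pi_1$ and then $(\pi_2\pi_1)^{-k}$, and use the commutation identity to rewrite the result as $i=\pi_1(\pi_2\pi_1)^k(j)$, so $j\sim i$ via the second form.

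For transitivity, suppose $i\sim j$ and $j\sim \ell$. Each witness takes one of two forms, producing four cases. In every case one composes the two maps and invokes the commutation identity at most once (whenever a $\pi_1$ must be pulled past a power of $\pi_2\pi_1$) to re-express the composition as either $(\pi_2\pi_1)^s$ or $\pi_1(\pi_2\pi_1)^s$ for an appropriate $s$. For instance, if $\pi_1(\pi_2\pi_1)^k(i)=j$ and $(\pi_2\pi_1)^m(j)=\ell$, then
\[
\ell=(\pi_2\pi_1)^m\pi_1(\pi_2\pi_1)^k(i)=\pi_1(\pi_2\pi_1)^{k-m}(i),
\]
so $i\sim\ell$; the remaining three cases are handled in the same manner. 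There is no genuine obstacle here — the entire proof is bookkeeping around the commutation identity, and the main care needed is to remember that passing a $\pi_1$ across $(\pi_2\pi_1)^k$ flips the sign of the exponent.
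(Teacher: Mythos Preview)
Your proof is correct and follows essentially the same approach as the paper: both verify reflexivity trivially, note symmetry is straightforward, and handle transitivity by splitting into four cases and using the commutation identity $\pi_1(\pi_2\pi_1)^k=(\pi_2\pi_1)^{-k}\pi_1$ (the paper even works out the same representative case you do). Your added remark that $\sim$ is the orbit relation for $\langle\pi_1,\pi_2\rangle$ is a nice conceptual framing the paper does not make explicit, but the actual argument is the same.
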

\begin{proof}
It is easy to verify that $\sim$ is reflexive and symmetric. To establish that $\sim$ is transitive, note that since $i$ could be related to $j$ through either of the two relations $\pi_1(\pi_2 \pi_1)^k(i)=j$ or $(\pi_2 \pi_1)^k(i)=j$, there are four possible cases to be checked. For example, suppose that $\pi_1(\pi_2\pi_1)^p(i)=j$ and $(\pi_2\pi_1)^q(j)=k$. Noting that $(\pi_2\pi_1)^q\pi_1=\pi_1(\pi_2\pi_1)^{-q}$, since $\pi_i$ are involutions, we have 
\begin{align*}
k&=(\pi_2\pi_1)^q\pi_1(\pi_2\pi_1)^p(i) \\
&=\pi_1(\pi_2\pi_1)^{p-q}(i),
\end{align*}
and hence $i \sim k$. The other cases can be checked similarly.
\qed
\end{proof}

For any $i,j \in \mathbf{2n}$, if $i$ and $j$ are in the same cycle of $\pi_2\pi_1$ then $j=(\pi_2\pi_1)^s(i)$ for some $s$ and hence $i \sim j$. Also, $i \sim \pi_1(i)$ and $\pi_1(i)$ is related to all the elements in the cycle of $\pi_2\pi_1$ that contains $\pi_1(i)$. Therefore an equivalence class under $\sim$ will be the union of the cycles of $\pi_1\pi_2$ containing $i$ and $\pi_1(i)$. 

Observe that $i \sim \pi_1(i)$ and $i \sim \pi_2(i)$. Hence the partition of $\mathbf{2n}$ under $\sim$ will also partition the $2$-cycles of $\pi_1$ and $\pi_2$. In what follows we would like to talk about the sub-permutations of $\pi_1$ and $\pi_2$ thus induced. Formally, 

\begin{definition}
Let $\pi_1$ and $\pi_2$ be genomic permutations, and let $\C_1,\C_2, \hdots, \C_r\subseteq\mathbf{2n}$ be the equivalence classes under $\sim$ defined by $\pi_1,\pi_2$. For $1\le s\le r$ and $i=1,2$, the \emph{sub-permutation} $\pi_i^{(s)}$ of $\pi_i$ induced by $\sim$ is defined to be the restriction of $\pi_i$ to $\C_s$, that is,
\[\pi_i^{(s)}:=\pi_i \big|_{\C_s}. \]

\end{definition}

Intuitively, we have collected in a sub-permutation all the $2$-cycles that are relevant for sorting $\pionesub{s}$ into $\pitwosub{s}$.

An example will help illustrate these definitions. Let $\pi_1$ and $\pi_2$ be the following genomic permutations on $4$ regions:
\[\pi_1=(1,6)(2,3)(4,5)(7,8),\quad \pi_2=(1,2)(3,4)(5,6).\]
The partitions of the set $\{1,2,\hdots,8\}$ under $\sim$ are $\C_1=\{1,2,3,4,5,6\}, \ \C_2=\{7,8\}$.

The sub-permutations $\pionesub{1}$ and $\pionesub{2}$ are then

$\pionesub{1}=(1,6)(2,3)(4,5)$, $\pionesub{2}=(7,8)$.
Similarly, the sub-permutations $\pitwosub{1}$ and $\pitwosub{2}$ are
$\pitwosub{1}=(1,2)(3,4)(5,6)$, $\pitwosub{2}=(7)(8)$ where the cycles of length 1 are written for clarity.

As remarked earlier, an equivalence class $\C_s$ under $\sim$ contains precisely those elements of $\mathbf{2n}$ that are contained in the cycle of $\pi_1\pi_2$ containing $i$ and $\pi_1(i)$. Therefore as proved in Lemmas~\ref{lemma:fpfree} and~\ref{lemma:withfp}, the product of the sub-permutations $\pionesub{s}$ and $\pitwosub{s}$ is either a single cycle containing one or two points from $F_{\pi_1} \cup F_{\pi_2}$ or a product of (exactly) two disjoint cycles. 

Suppose the sub-permutations $\pionesub{s}$ and $\pitwosub{s}$ are distinct. If either $\pionesub{s}$ and $\pitwosub{s}$ are conjugate in $S_{2n}$, then they have the same cycle type. Hence either $\C_s$ contains no points from $F_{\pi_1} \cup F_{\pi_2}$, or it contains one point each from $F_{\pi_1}$ and $F_{\pi_2}$. In the first case, it follows from Lemma~\ref{lemma:fpfree} that $\C_s$ contains an even number of points. In the latter case, the cardinality of $\C_s$ is odd.

These observations are summarised in corollary~\ref{cor:pi2pi1}. 

\begin{corollary}\label{cor:pi2pi1}
Let $\pionesub{s}$ and $\pitwosub{s}$ be distinct sub-permutations induced by an equivalence class $\C_s$ such that $\pionesub{s}$ and $\pitwosub{s}$ are conjugate in $S_{2n}$. Let $i \in \C_s$. The product $\pitwosub{s}\pionesub{s}$ is given by
\[
\pitwosub{s}\pionesub{s}=
\begin{cases}
 \left(i,\pi_2\pi_1(i), \hdots , (\pi_2\pi_1)^u(i) \right)\left(\pi_1(i),\pi_1\pi_2\pi_1(i), \hdots , (\pi_1\pi_2)^{u-1}\pi_1(i)\right)
 & \text{ if } F_{\pi_1} \cup F_{\pi_2}= \emptyset, \\
 \left(i,\pi_2\pi_1(i), \hdots (\pi_2\pi_1)^{2u}(i)\right)
 &\text{ if } F_{\pi_1} \cup F_{\pi_2} \neq \emptyset, i \in F_{\pi_1}.
\end{cases}
\]
The sum of lengths of the cycles in the product is the cardinality of $\C_s$.
\end{corollary}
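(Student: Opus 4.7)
The plan is to unpack $\C_s$ as a union of cycles of $\pi_2\pi_1$ and then split into the two stated cases via Lemmas~\ref{lemma:fpfree} and~\ref{lemma:withfp}. First I would establish the key observation that, by the very definition of $\sim$, the class $\C_s$ is exactly the union of the $\pi_2\pi_1$-orbit through $i$ and the $\pi_2\pi_1$-orbit through $\pi_1(i)$: every element reachable from $i$ via the generating relations of $\sim$ lies in one of these orbits, and both are visibly contained in $\C_s$. This reduces the problem to identifying these orbits explicitly.

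When $F_{\pi_1}\cup F_{\pi_2}=\varnothing$, Lemma~\ref{lemma:fpfree}(1) guarantees that the two orbits are distinct and Lemma~\ref{lemma:fpfree}(2) that they have a common length $u+1$. The cycle through $i$ is read off immediately from its definition. For the cycle through $\pi_1(i)$, I would verify by induction on $k$, starting from the base case $\pi_1\pi_2\pi_1=(\pi_1\pi_2)\pi_1$, the commutation identity $\pi_1(\pi_2\pi_1)^k=(\pi_1\pi_2)^k\pi_1$; applying this to each term $(\pi_2\pi_1)^k\pi_1(i)$ of the orbit rewrites it as $(\pi_1\pi_2)^k\pi_1(i)$, putting the cycle in the form displayed in the corollary. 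Summing the two cycle lengths then yields $|\C_s|$.

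When $F_{\pi_1}\cup F_{\pi_2}\neq\varnothing$ and $i\in F_{\pi_1}$, the relation $\pi_1(i)=i$ collapses the two orbits of the previous step into a single cycle through $i$, so $\C_s$ coincides with this cycle. Here the hypothesis that $\pionesub{s}$ and $\pitwosub{s}$ are conjugate in $S_{2n}$ is used in its full strength: equal cycle types force them to contribute the same number of fixed points inside $\C_s$. Combined with Lemma~\ref{lemma:withfp}, which caps the total fixed-point count in the cycle at two and produces at least one further fixed point besides $i$, this pins down $\C_s$ as containing exactly one fixed point of $\pi_1$ and one of $\pi_2$. Re-running the parity case split inside the proof of Lemma~\ref{lemma:withfp} then rules out the even-cycle-length alternative, which would place the extra fixed point in $F_{\pi_1}$, leaving a cycle of odd length $2u+1$ in precisely the asserted form.

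The step I expect to require the most care is this last one: extracting the parity of the cycle length from the conjugacy hypothesis via Lemma~\ref{lemma:withfp}. The remainder is either a short induction establishing the commutation identity, or a direct unpacking of the definition of $\sim$.
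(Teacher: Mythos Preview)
Your approach is essentially the paper's own: the corollary is presented there not with a separate proof but as a summary of the paragraphs immediately preceding it, which unpack $\C_s$ as the union of the $\pi_2\pi_1$-orbits of $i$ and $\pi_1(i)$, invoke Lemmas~\ref{lemma:fpfree} and~\ref{lemma:withfp} for the one-cycle/two-cycle dichotomy, and use conjugacy (equal cycle types) to control the fixed-point count and hence the parity of $|\C_s|$ --- exactly the skeleton you outline.

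One technical slip worth flagging in your fixed-point-free case: the identity $\pi_1(\pi_2\pi_1)^k=(\pi_1\pi_2)^k\pi_1$ is correct, but it does \emph{not} rewrite $(\pi_2\pi_1)^k\pi_1(i)$ as $(\pi_1\pi_2)^k\pi_1(i)$, since the $\pi_1$ sits on the wrong side. What the identity actually yields is $(\pi_1\pi_2)^k\pi_1(i)=\pi_1\bigl((\pi_2\pi_1)^k(i)\bigr)$, i.e.\ the displayed second cycle is obtained by applying $\pi_1$ termwise to the first. That, together with the observation that the $\pi_2\pi_1$-orbit and the $\pi_1\pi_2$-orbit of any point coincide as sets (the two maps being mutually inverse), is enough to identify the support of the second cycle with the $\pi_2\pi_1$-orbit of $\pi_1(i)$; the rest of your argument goes through unchanged.
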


Continuing our example above, we see $\pionesub{1}\pitwosub{1}=(1,3,5)(2,6,4)$, $\pionesub{2}\pitwosub{2}=(7,8)$.

Observe that $\pionesub{1}\pitwosub{1}$ is a product of two disjoint cycles. $\pionesub{2}\pitwosub{2}$ contains two points from $F_{\pi_2}$ namely $7$ and $8$.

If a partition $\C_t$ consists of a single point say $i$ then $i$ is a fixed point of both $\pi_1$ and $\pi_2$, hence the DCJ distance between sub-permutations induced by $\C_t$ is $0$.

We will determine the DCJ distance between $\pi_1$ and $\pi_2$ by determining the DCJ distance between $\pionesub{s}$ and $\pitwosub{s}$ for $s \in \{1,2, \hdots, r\}$. 

\begin{definition}
For any permutation $\pi$, the transposition length of $\pi$ denoted by $\newl_t(\pi)$ is the minimal number of transpositions needed to express $\pi$.
\end{definition}

Since the $D_{ij}$ operation involves multipliying a permutation with transpositions, we are interested in how multiplication by a transposition affects the tranposition length of a permutation. In fact this effect is easily stated: multiplication by a transposition changes the transposition length of a permutation by $\pm 1$. 

 That is, 
\begin{eqnarray}
	\label{equn:transLen} 
	&\newl_t\left((i,j\right)\pi)=\newl_t(\pi) \pm 1,\nonumber \\
	&\newl_t\left(\pi(i,j)\right)=\newl_t(\pi) \pm 1.
\end{eqnarray}

This can be observed by noting first that a permutation can be expressed as a product of either an odd or an even number of transpositions, but not both. That is,  the  parity of the number of transpositions needed to write a permutation as a product is unique. 

Let the transposition length of a permutation $\pi$ be $r$. That is,
\[\pi=t_1 t_2 \hdots t_r,\]
where the $t_i$ are transpositions. 

Suppose $r$ is odd. Then the parity of $(i,j)\pi$ is even since $(i,j) t_1 t_2 \hdots t_r$ is one expression of the result as a product of transpositions, although it may not be minimal. The transposition length of $(i,j)\pi$ is also therefore even, and hence it is either $r+1$ or $r-1$.  A similar argument follows if $r$ is even.

In the remaining part of this section, we will show that the DCJ distance between $\pi_1$ and $\pi_2$ can be determined in terms of the transposition length of the permutation product $\pi_2\pi_1$. First of all, note that if $\pi_1=\pi_2$ then $\pi_2\pi_1=()$ where $()$ is the identity permutation, hence $\newl_t\left(\pi_2\pi_1\right)=0$.

We make the following claim regarding a lower bound on the DCJ distance between permutations $\pi_1$ and $\pi_2$.

\begin{lemma}
\label{thm:lowerBound}
Let $\pi_1$ and $\pi_2$ be genomic permutations.  Then
\[d_{DCJ}\left(\pi_1,\pi_2 \right) \geq \frac{\newl_t\left(\pi_1 \pi_2\right)}{2}.\]

\end{lemma}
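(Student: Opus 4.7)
The plan is to track the transposition length of the product $\pi_2\sigma$ along an optimal sorting sequence $\pi_1 = \sigma_0, \sigma_1, \ldots, \sigma_d = \pi_2$, where $d = d_{DCJ}(\pi_1,\pi_2)$ and each $\sigma_{k+1}$ is obtained from $\sigma_k$ by a single DCJ step. At the two endpoints we have $\ell_t(\pi_2\sigma_0) = \ell_t(\pi_2\pi_1)$ and $\ell_t(\pi_2\sigma_d) = \ell_t(\pi_2^2) = 0$, so the bound will follow as soon as we control how much $\ell_t(\pi_2\sigma_k)$ can change in a single step.

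First I would recall from Definition~\ref{d:Dij} the two possible shapes of a DCJ step: either $\sigma_{k+1} = (i,j)\sigma_k$ or $\sigma_{k+1} = (i,j)\sigma_k(i,j)$. Using the conjugation identity $\pi_2(i,j) = (\pi_2(i),\pi_2(j))\pi_2$, these rewrite as
\[
\pi_2\sigma_{k+1} = (\pi_2(i),\pi_2(j))\cdot(\pi_2\sigma_k)
\qquad\text{or}\qquad
\pi_2\sigma_{k+1} = (\pi_2(i),\pi_2(j))\cdot(\pi_2\sigma_k)\cdot(i,j).
\]
In either case $\pi_2\sigma_{k+1}$ differs from $\pi_2\sigma_k$ by multiplication by at most two transpositions, so applying~\eqref{equn:transLen} once (Case~1 of $D_{ij}$) or twice (Case~2) gives
\[
|\ell_t(\pi_2\sigma_{k+1}) - \ell_t(\pi_2\sigma_k)| \leq 2.
\]

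Iterating this bound along the sorting sequence via the triangle inequality yields $\ell_t(\pi_2\pi_1) = |\ell_t(\pi_2\sigma_0) - \ell_t(\pi_2\sigma_d)| \leq 2d$, and rearranging gives $d_{DCJ}(\pi_1,\pi_2) \geq \ell_t(\pi_2\pi_1)/2$. To match the statement of the lemma, I would close by observing that $\pi_1\pi_2 = (\pi_2\pi_1)^{-1}$, and since a permutation and its inverse share cycle type they share transposition length, so $\ell_t(\pi_1\pi_2) = \ell_t(\pi_2\pi_1)$. I do not expect any serious obstacle: the only subtle point is noticing that a Case~2 DCJ acts on $\pi_2\sigma_k$ by transpositions on both sides, so the worst-case per-step change in $\ell_t$ is $2$ rather than $1$; this is precisely what makes the factor of $\tfrac12$ unavoidable in the lower bound rather than spurious.
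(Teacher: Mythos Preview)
Your proposal is correct and follows essentially the same approach as the paper: both arguments show that a single DCJ step changes the transposition length of the relevant product by at most $2$, then conclude the lower bound. The only cosmetic differences are that the paper tracks $\sigma_k\pi_2$ rather than $\pi_2\sigma_k$, and in the conjugation case it rewrites $(i,j)\sigma_k(i,j)\pi_2$ as $(i,j)(\sigma_k(i),\sigma_k(j))\sigma_k\pi_2$ (two left multiplications) whereas you leave one transposition on each side; either way, equation~\eqref{equn:transLen} is applied at most twice per step.
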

\begin{proof}
A single DCJ operation $D_{ij}$ acts either by conjugation of $\pi_1$ by the transposition $(i,j)$, or by multiplication of $\pi_1$ by $(i,j)$. 

If $D_{ij}(\pi_1)=(i,j)\pi_1$, then $D_{ij}(\pi_1)\pi_2=(i,j)\pi_1\pi_2$. Hence by equation~\eqref{equn:transLen}
\[\newl_t\left(D_{ij}(\pi_1) \pi_2\right)=\newl_t \left(\pi_1 \pi_2 \right) \pm 1.\]

If $D_{ij}(\pi_1)=(i,j)\pi_1(i,j)$ then
\[D_{ij}(\pi_1)\pi_2=(i,j)(\pi_1(i),\pi_1(j))\pi_1\pi_2.\]

By applying equation~\ref{equn:transLen} twice, 
\[\newl_t\left(D_{ij}(\pi_1) \pi_2\right)=\newl_t\left(\pi_1 \pi_2 \right) \text{ or }\newl_t \left( \pi_1 \pi_2 \right) \pm 2.\] 

Thus a single DCJ operation on $\pi_1$ can reduce the transposition length of $\pi_1 \pi_2$ by at most $2$. Since $\newl_t(\pi_2\pi_1)=0$ when $\pi_1=\pi_2$, the DCJ distance between $\pi_1$ and $\pi_2$ must be at least $\newl_t(\pi_1 \pi_2)/2$. 
\qed
\end{proof}
\subsection{DCJ distance between conjugate sub-permutations}
Let the sub-permutations $\pionesub{s}$ and $\pitwosub{s}$ be conjugate in $S_{2n}$. That is, there exists a $g\in S_{2n}$ such that
\[g\pionesub{s} g^{-1}=\pitwosub{s}.\] 
By writing $g$ as a product of transpositions, we obtain a sequence of DCJ operations that transforms $\pionesub{s}$ to $\pitwosub{s}$, each of which is conjugation by a transposition. Let $d^c_{DCJ}\left(\pionesub{s},\pitwosub{s}\right)$ be the minimal number of DCJ conjugation operations needed to transform $\pionesub{s}$  into $\pitwosub{s}$. Then clearly
\[d_{DCJ}\left(\pionesub{s},\pitwosub{s}\right) \leq d^c_{DCJ}\left(\pionesub{s},\pitwosub{s}\right).\] 

\begin{theorem}
\label{thm:conjugationDistance}
Let $\pionesub{s}$ and $\pitwosub{s}$ be sub-permutations of genomic permutations $\pi_{1}$ and $\pi_{2}$ on $n$ regions such that $\pionesub{s}$ and $\pitwosub{s}$ are conjugate in $S_{2n}$. Then the conjugation distance $d^c_{DCJ}\left(\pionesub{s},\pitwosub{s}\right)$ is half the transposition length of $\pitwosub{s} \pionesub{s}$, that is,
\[d^c_{DCJ}\left(\pionesub{s},\pitwosub{s}\right)=\frac{1}{2}\newl_t \left( \pitwosub{s} \pionesub{s} \right).\]

\end{theorem}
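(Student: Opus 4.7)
My plan is to establish matching lower and upper bounds on $d^c_{DCJ}(\pionesub{s},\pitwosub{s})$.

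For the lower bound, I would adapt the argument of Lemma~\ref{thm:lowerBound} to the conjugation-only setting. A single conjugation $D_{bc}(\pi_1)=(b,c)\pi_1(b,c)$ produces
\[
\pi_2 D_{bc}(\pi_1) = (\pi_2(b),\pi_2(c))\cdot\pi_2\pi_1\cdot(b,c),
\]
which differs from $\pi_2\pi_1$ by exactly two transposition factors and therefore changes the transposition length by at most $2$. Since any sorting sequence must reach transposition length $0$, at least $\tfrac{1}{2}\newl_t(\pitwosub{s}\pionesub{s})$ conjugation DCJ steps are required.

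For the upper bound I would proceed by strong induction on $\newl_t(\pitwosub{s}\pionesub{s})$. The base case $\newl_t=0$ forces $\pionesub{s}=\pitwosub{s}$, so nothing needs to be done. In the inductive step, given $\pionesub{s}\neq\pitwosub{s}$, the goal is to exhibit a single valid conjugation DCJ that cuts $\newl_t$ by exactly $2$; the induction hypothesis then finishes the argument. The candidate is constructed by picking any $i\in\C_s$ that is fixed by neither $\pionesub{s}$ nor $\pitwosub{s}$ and satisfies $\pionesub{s}(i)\neq\pitwosub{s}(i)$, and setting $b=\pionesub{s}(i)$, $c=\pitwosub{s}(i)$. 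The existence of such an $i$ must be verified using Corollary~\ref{cor:pi2pi1}: $\C_s$ contains at most one fixed point each from $\pi_1$ and $\pi_2$, and because involutions pair points, if $\pionesub{s}$ and $\pitwosub{s}$ agreed off their fixed points they would have to agree everywhere. One also checks that $D_{bc}$ falls in the conjugation branch of Definition~\ref{d:Dij}, since $\pionesub{s}(b)=i\neq c$.

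The hard step will be showing that this specific conjugation decreases $\newl_t(\pitwosub{s}\pionesub{s})$ by exactly $2$. Setting $\sigma=\pitwosub{s}\pionesub{s}$, the identity
\[
\pitwosub{s}\cdot D_{bc}(\pionesub{s}) = (\pitwosub{s}(b),\pitwosub{s}(c))\cdot\sigma\cdot(b,c) = (\sigma(i),i)\cdot\sigma\cdot(b,c)
\]
decomposes the effect into two transposition multiplications on $\sigma$. Because $\sigma(b)=\pitwosub{s}\pionesub{s}(b)=\pitwosub{s}(i)=c$, the points $b$ and $c$ are adjacent within one cycle of $\sigma$, so right-multiplication by $(b,c)$ splits that cycle and leaves $c$ as a new fixed point. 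I would then argue that $i$ and $\sigma(i)$ still lie in a common cycle of $\sigma(b,c)$: in the no-fixed-point branch of Corollary~\ref{cor:pi2pi1} this is just the cycle of $\sigma$ left untouched by the first step, while in the fixed-point branch one needs $i,\sigma(i)\neq c$, which follows from $i\neq b$ by injectivity of $\pitwosub{s}$. Left-multiplication by $(\sigma(i),i)$ therefore triggers a second cycle split. Two successive splits raise the total cycle count of $\sigma$ by $2$ and cut $\newl_t$ by $2$, closing the induction.
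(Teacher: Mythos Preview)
Your proposal is correct, but it takes a genuinely different route from the paper's proof of Theorem~\ref{thm:conjugationDistance}. The paper argues by induction on the conjugation distance $r=d^c_{DCJ}(\pionesub{s},\pitwosub{s})$: assuming the result for all smaller distances, it takes an optimal conjugation sequence of length $u$, peels off the last step to reach an intermediate $\pi_1'$, applies the hypothesis to both $(\pionesub{s},\pi_1')$ and $(\pi_1',\pitwosub{s})$, and then bounds $\newl_t(\pitwosub{s}\pionesub{s})$ via the triangle-type estimate $\newl_t \in \{2u-4,2u-2,2u\}$, ruling out the smaller values by minimality of $u$. You instead induct on $\newl_t(\pitwosub{s}\pionesub{s})$ and exhibit an explicit conjugation $D_{bc}$ with $b=\pionesub{s}(i)$, $c=\pitwosub{s}(i)$ that drops $\newl_t$ by exactly~$2$; your cycle-splitting analysis via $\sigma(b)=c$ and $\sigma'(i)=\sigma(i)$ is sound in both the two-cycle and single-cycle branches of Corollary~\ref{cor:pi2pi1}. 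Your argument is more constructive --- it directly produces a sorting sequence one transposition at a time --- and is in spirit closer to the paper's later Lemma~\ref{lemma:scenario}, which writes down the full sorting cycle $g$ in one shot, than to the non-constructive induction the paper uses here. One small point worth tightening: after applying $D_{bc}$, the pair $(D_{bc}(\pionesub{s}),\pitwosub{s})$ may no longer constitute a single equivalence class under~$\sim$, so your induction is cleanest if stated for arbitrary pairs of conjugate genomic involutions on a common support rather than for ``sub-permutations'' per se; the existence of your point $i$ and the cycle-splitting computation go through unchanged in that generality.
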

\begin{proof}
We prove the claim by induction on $r:=d^c_{DCJ}\left(\pionesub{s},\pitwosub{s}\right)$.

Suppose first that $d^c_{DCJ}\left(\pionesub{s},\pitwosub{s}\right)=1$. Because $\pionesub{s}$ and $\pitwosub{s}$ are conjugate,  $\pitwosub{s}\pionesub{s}$ is an even permutation, so $\newl_t \left( \pitwosub{s} \pionesub{s} \right)$ is at least $2$.

Since $r=1$, there exists a transposition $(i,j) \in S_{2n}$ such that $(i,j)\pionesub{s}(i,j)=\pitwosub{s}$. Hence,

\[\pitwosub{s} \pionesub{s} = (i,j)\pionesub{s}(i,j)\pionesub{s}=(i,j)(\pionesub{s}(i),\pionesub{s}(j)).\]

Therefore the transposition length of $\pitwosub{s} \pionesub{s}$ is $2$. 

Assume that the hypothesis is true for all $r \in \mathbb{N}$, with $r < u $. That is, for $r<u$,
\[r=d^c_{DCJ}\left(\pionesub{s},\pitwosub{s}\right)=\frac{1}{2} \newl_t\left(\pitwosub{s}\pionesub{s}\right).\]

Next suppose that $d^c_{DCJ}\left(\pionesub{s},\pitwosub{s}\right)=u$. That is,

\[w_u w_{u-1}\hdots w_1 (\pionesub{s}) w_1 \hdots w_{u-1} w_{u}=\pitwosub{s},\]
for some  transpositions $w_i \in S_{2n}$ with $u$ minimal.

Let $\pi_1'=w_{u-1}\hdots w_1 (\pionesub{s}) w_1 \hdots w_{u-1}$. 

Since the conjugation distance between $\pi_1'$ and $\pi_1$ is $u-1 < u$, from the induction hypothesis we know that $\newl_t\left(\pi_1'\pionesub{s}\right)=2(u-1)$.

Write \[\pi_1'\pionesub{s}=t_1 t_2 \hdots t_{2(u-1)}\] for  transpositions $t_i \in S_{2n}$.

Since $w_u \pi_1' w_u=\pitwosub{s}$, we have $d^c_{DCJ}\left(\pi_1',\pitwosub{s}\right)=1$, and hence $\newl_t\left(\pitwosub{s}\pi_1' \right)=2$. That is,
$\pitwosub{s}\pi_1'=v_1v_2'$ where  $v_1$ and $v_2$ are transpositions in $S_{2n}$.

Then 
\[\pitwosub{s}\pionesub{s}=\pitwosub{s}\pi_1'\pi_1'\pionesub{s}=v_1v_2t_1 t_2 \hdots t_{2(u-1)}.\]

This is a product of a permutation of transposition length $2(u-1)$ with two transpositions. The transposition length of the result will be $\newl_t\left(\pitwosub{s}\pionesub{s}\right) \in \{2u-4,2u-2,2u\}$.  However, if $\newl_t \left(\pitwosub{s}\pionesub{s} \right)<2u $ then $d^c_{DCJ}(\pionesub{s},\pitwosub{s}) < u$, contrary to our assumption. Hence $\newl_t(\pitwosub{s}\pionesub{s})=2u$. That is, 
\[d^c_{DCJ}\left(\pionesub{s},\pitwosub{s}\right)=u \implies \newl_t \left(\pitwosub{s}\pionesub{s}\right)=2u.\]
\qed
\end{proof}

Putting together the lower bound for DCJ distance from Lemma~\ref{thm:lowerBound} with the upper bound from Theorem~\ref{thm:conjugationDistance}, we have the following.

\begin{corollary}
If $\pionesub{s}$ and $\pitwosub{s}$ are conjugate sub-permutations of the genomic permutations $\pi_{1}$ and $\pi_{2}$ on $n$ regions, 
then 
\[d_{DCJ}\left(\pionesub{s},\pitwosub{s}\right)=\frac{\newl_t\left(\pitwosub{s}\pionesub{s}\right)}{2}.\]
\end{corollary}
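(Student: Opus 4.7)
The plan is to sandwich $d_{DCJ}\!\left(\pionesub{s},\pitwosub{s}\right)$ between matching lower and upper bounds, both of which have already been established earlier in this section. Concretely, I will invoke Lemma~\ref{thm:lowerBound} on the sub-permutations $\pionesub{s}$ and $\pitwosub{s}$ to obtain
\[
d_{DCJ}\!\left(\pionesub{s},\pitwosub{s}\right) \;\ge\; \tfrac{1}{2}\,\newl_t\!\left(\pionesub{s}\pitwosub{s}\right),
\]
and then use the inequality $d_{DCJ}\!\left(\pionesub{s},\pitwosub{s}\right) \le d^c_{DCJ}\!\left(\pionesub{s},\pitwosub{s}\right)$ together with Theorem~\ref{thm:conjugationDistance} to get the matching upper bound $\tfrac{1}{2}\,\newl_t\!\left(\pitwosub{s}\pionesub{s}\right)$.

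To glue the two bounds together, I need to observe that $\pionesub{s}\pitwosub{s}$ and $\pitwosub{s}\pionesub{s}$ are inverses of each other in $S_{2n}$, and any permutation and its inverse have the same transposition length (a product $t_1t_2\cdots t_k$ of transpositions inverts to $t_k\cdots t_2t_1$, since each $t_i$ is an involution, so a minimal expression of one gives a minimal expression of the other). Hence $\newl_t\!\left(\pionesub{s}\pitwosub{s}\right)=\newl_t\!\left(\pitwosub{s}\pionesub{s}\right)$, and the two bounds coincide, forcing equality.

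I should also briefly justify why Lemma~\ref{thm:lowerBound} applies verbatim to $\pionesub{s}$ and $\pitwosub{s}$: although it is stated for genomic permutations on $n$ regions, its proof only uses that the two permutations are involutions and that a DCJ operator either left-multiplies by a transposition or conjugates by one, both of which remain true when the DCJ acts on the restricted supports $\C_s$. The same remark applies to Theorem~\ref{thm:conjugationDistance}.

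There is no real obstacle here: the corollary is essentially a tautology once both bounds are in hand, and the only substantive verification is the transposition-length equality for inverse permutations. The total write-up should be three or four lines of LaTeX, displaying the chain of inequalities
\[
\tfrac{1}{2}\,\newl_t\!\left(\pitwosub{s}\pionesub{s}\right) \;=\; \tfrac{1}{2}\,\newl_t\!\left(\pionesub{s}\pitwosub{s}\right) \;\le\; d_{DCJ}\!\left(\pionesub{s},\pitwosub{s}\right) \;\le\; d^c_{DCJ}\!\left(\pionesub{s},\pitwosub{s}\right) \;=\; \tfrac{1}{2}\,\newl_t\!\left(\pitwosub{s}\pionesub{s}\right),
\]
and concluding equality throughout.
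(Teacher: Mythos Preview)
Your proposal is correct and follows exactly the approach the paper itself takes: the corollary is stated immediately after Theorem~\ref{thm:conjugationDistance} with the one-line justification ``Putting together the lower bound for DCJ distance from Lemma~\ref{thm:lowerBound} with the upper bound from Theorem~\ref{thm:conjugationDistance}, we have the following.'' Your write-up is in fact slightly more careful than the paper's, since you explicitly note that $\newl_t(\pionesub{s}\pitwosub{s})=\newl_t(\pitwosub{s}\pionesub{s})$ and remark on why the lemma applies to sub-permutations.
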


\subsection{Constructing a sorting element for conjugate sub-permutations}

When the sub-permutations $\pionesub{s}$ and $\pitwosub{s}$ induced by $\C_s$ are conjugate, an element of minimal length sorting $\pionesub{s}$ into $\pitwosub{s}$ can easily be constructed as follows.  Corollary~\ref{cor:pi2pi1} gives the structure of the product $\pitwosub{s}\pionesub{s}$.

If $\pionesub{s}$ and $\pitwosub{s}$ have no fixed points then 
\begin{equation}\label{eq:double.cycle}
\pitwosub{s}\pionesub{s}=\left(1,\pi_2\pi_1(1),(\pi_2\pi_1)^2(1), \hdots, (\pi_2\pi_1)^u(1)\right)\left(\pi_1(1), \pi_2(1),\pi_2\pi_1\pi_2(1),\hdots, ( \pi_2\pi_1)^{u-1}\pi_2(1)\right).
\end{equation}

If $\pionesub{s}$ and $\pitwosub{s}$ contain fixed points then $\pitwosub{s} \pionesub{s}$ is a  single cycle 
\begin{equation}\label{eq:single.cycle}
\pitwosub{s}\pionesub{s}=\left(1,\pi_2\pi_1(1),(\pi_2\pi_1)^2(1), \hdots, (\pi_2\pi_1)^{2u}(1)\right). 
\end{equation}

Let $g=\left(1,\pi_2\pi_1(1),(\pi_2\pi_1)^2(1) \hdots ,(\pi_2\pi_1)^u(1)\right)$. We claim that 
\[g\pionesub{s}g^{-1}=\pitwosub{s}.\]

If $i$ is moved by $g$, then $g(i)=\pi_2\pi_1(i)$. For any $2$-cycle $\left(i,\pi_1(i)\right)$ in $\pionesub{s}$, either $i$ or $\pi_1(i)$ is moved by $g$. If $\pitwosub{s}\pionesub{s}$ is a product of two cycles then as proved in Lemma~\ref{lemma:withfp}, $i$ and $\pi_1(i)$ are in different cycles of $\pitwosub{s}\pionesub{s}$ . Since $g$ is precisely one of the two cycles of $\pitwosub{s}\pionesub{s}$, it moves exactly one of $i$ and $\pi_1(i)$. 

On the other hand, if $\pitwosub{s}\pionesub{s}$ is a single cycle as in Eq~\eqref{eq:single.cycle}, then suppose $\pi_1(1)=1$ (if instead $\pi_2(1)=1$, a similar argument would apply). Then,
\begin{align*}
1 &=\left(\pitwosub{s}\pionesub{s}\right)^{2u+1}(1) \\
  &=\pitwosub{s}\left(\pionesub{s}\pitwosub{s}\right)^{2u}(1).
\end{align*}
This implies that  $\left(\pionesub{s}\pitwosub{s}\right)^{2u}(1)=\pitwosub{s}(1)$. Now suppose $i=\left(\pitwosub{s}\pionesub{s}\right)^b(1)$ for some $b$ then $\pionesub{s}(i)=\left(\pitwosub{s}\pionesub{s}\right)^a(i)$ for some $a$, since $i$ and $\pionesub{s}(i)$ are in the same cycle. Also,
\begin{align*}
\pionesub{s}(i)&=\pionesub{s}\left(\pitwosub{s}\pionesub{s}\right)^b(1)\\
&=\left(\pionesub{s}\pitwosub{s}\right)^b(1) \\
&=\left(\pionesub{s}\pitwosub{s}\right)^{-2u+b}\left(\pionesub{s}\pitwosub{s}\right)^{2u}(1) \\
&=\left(\pionesub{s}\pitwosub{s}\right)^{-2u+b}\pitwosub{s}(1)=\left(\pionesub{s}\pitwosub{s}\right)^{-2u+b-1}\\
&=\left(\pitwosub{s}\pionesub{s}\right)^{2u-b+1}(1).
\end{align*}

If $i$ is moved by $g$, that is if $i$ is in the cycle $\left(1,\pi_2\pi_1(1),\hdots,(\pi_2\pi_1)^u(1)\right)$,  then $b \leq u $ which means that $a=2u-b+1 > u$ and $\pionesub{s}(i)$ is not in this cycle and hence not moved by $g$.

If $\pionesub{s}(i)$ is moved by $g$, that is if $\pionesub{s}(i)$ is in the cycle $\left(1,\pi_2\pi_1(1),\hdots,(\pi_2\pi_1)^u(1)\right)$,  then $a= 2u-b+1 \leq u $ which means that $u+1 \leq b$ and $i$ is not in this cycle and hence not moved by $g$.

Thus we have established that whether the product $\pitwosub{s}\pionesub{s}$ is given by Eq~\eqref{eq:double.cycle} or Eq~\eqref{eq:single.cycle}, for any $i \in \C_s$, $g$ moves either $i$ or $\pionesub{s}(i)$.

So if $i$ is moved by $g$, then $g(i)=\pi_2\pi_1(i)$, and since $\pi_1(i)$ is then not moved by $g$, $g(\pi_1(i))=\pi_1(i)$.
Consider the product $g \pionesub{s} g^{-1}$. For any $2$-cycle $(i,\pi_1(i))$ in $\pionesub{s}$ suppose $i$ is moved by $g$. We have
 
\[g\left(i,\pi_1(i)\right)g^{-1}=\left(g(i),g\left(\pi_1(i)\right)\right)=\left(\pi_2\left(\pi_1(i)\right),\pi_1(i)\right)\]
which is the $2$-cycle in $\pitwosub{s}$ containing $\pi_1(i)$. Thus

\[g \pionesub{s} g^{-1}=\pitwosub{s}.\]

Since the transposition length of $g$ is $u$ (that is, we require $u$ transpositions to express $g$ as a product), and conjugation by a $2$-cycle $(i,j)$ is one $D_{ij}$ operation, we require $u$ DCJ operations to sort $\pionesub{s}$ into $\pitwosub{s}$. This is exactly the DCJ distance between them. Hence $g$ gives an optimal sorting element that is, $g \pionesub{s} g^{-1}=\pitwosub{s}$.
 
As there is nothing special about the choice of $1$ in this argument, the cycle 
\[\left(\pi_1(1), \pi_2(1),\pi_2\pi_1\pi_2(1),\hdots ,( \pi_2\pi_1)^{u-1}\pi_2(1)\right)\]
is also an optimal sorting element.

The above construction might be better understood through an example. Let $\pi_1$ and $\pi_2$ be the following genomic permutations on $6$ regions:
\[\pi_1=(1,6)(2,3)(4,5) ,\quad \pi_2=(1,2)(3,4)(5,6).\]

Since $\pi_1$ and $\pi_2$ have the same cycle structure, we know that they are conjugate in $S_{2n}$. Consider the product $\pi_1\pi_2$.
\[\pi_1\pi_2=(1,6)(2,3)(4,5)(1,2)(3,4)(5,6)=(1,3,5)(2,6,4).\]

Let $g=(1,3,5)$. Then $g^{-1}=(1,5,3)$.

\[g\pi_2g^{-1}=(1,5,3)(1,2)(3,4)(5,6)(1,3,5)=(1,6)(2,3)(4,5)=\pi_1.\]

The above discussion is summarised in Lemma~\ref{lemma:scenario}.
\begin{lemma} \label{lemma:scenario}
Let $\pionesub{s},\pitwosub{s}$ and $u$ be as in Corollary~\ref{cor:pi2pi1}.  An  element $g$ such that $\newl_t(g)=d_{DCJ}(\pionesub{s},\pitwosub{s})$ that sorts $\pionesub{s}$ into $\pitwosub{s}$ can be constructed from the product $\pitwosub{s}\pionesub{s}$ as 
\[g=\left( 1,\pi_2\pi_1(1), (\pi_2\pi_1)^2(1),\hdots , (\pi_2\pi_1)^u(1) \right).\]
\end{lemma}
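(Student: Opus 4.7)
The plan is to verify two things about the cycle
\[g=\left(1,\pi_2\pi_1(1),(\pi_2\pi_1)^2(1),\dots,(\pi_2\pi_1)^u(1)\right):\]
first, that $g\,\pionesub{s}\,g^{-1}=\pitwosub{s}$; second, that $\newl_t(g)$ equals the DCJ distance. The transposition count is the easy part: a cycle of length $u+1$ has transposition length $u$, and by Corollary~\ref{cor:pi2pi1} together with Theorem~\ref{thm:conjugationDistance} (and the matching lower bound from Lemma~\ref{thm:lowerBound}), this is exactly $d_{DCJ}(\pionesub{s},\pitwosub{s})$. So the substance of the proof is the conjugation identity.

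To prove $g\,\pionesub{s}\,g^{-1}=\pitwosub{s}$, I would first establish a support lemma: for every $2$-cycle $\{i,\pionesub{s}(i)\}$ of $\pionesub{s}$, exactly one of $i$ and $\pionesub{s}(i)$ lies in the support of $g$. Once this is in hand, the identity is immediate from the conjugation rule for transpositions:
\[g\,(i,\pionesub{s}(i))\,g^{-1}=(g(i),g(\pionesub{s}(i)))=(\pi_2\pi_1(i),\pionesub{s}(i))=(\pitwosub{s}(\pionesub{s}(i)),\pionesub{s}(i)),\]
which is precisely the $2$-cycle of $\pitwosub{s}$ through $\pionesub{s}(i)$. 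Since every $2$-cycle of $\pitwosub{s}$ shares a point with some $2$-cycle of $\pionesub{s}$ (both involutions act on the same set $\C_s$), all of $\pitwosub{s}$ is accounted for.

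The support lemma splits according to the two cases of Corollary~\ref{cor:pi2pi1}. In the fixed-point-free case the product $\pitwosub{s}\pionesub{s}$ is a disjoint union of two $(u+1)$-cycles, and $g$ is one of them; by Lemma~\ref{lemma:fpfree}(1), $i$ and $\pionesub{s}(i)$ lie in different cycles of $\pitwosub{s}\pionesub{s}$, so $g$ captures exactly one of them. In the case with fixed points, $\pitwosub{s}\pionesub{s}$ is a single $(2u+1)$-cycle and $g$ is only its ``first half,'' so one must work harder. Assuming without loss of generality that $1\in F_{\pi_1}$, the relation $(\pitwosub{s}\pionesub{s})^{2u+1}(1)=1$ combined with $\pionesub{s}(1)=1$ yields $(\pionesub{s}\pitwosub{s})^{2u}(1)=\pitwosub{s}(1)$. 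Writing $i=(\pitwosub{s}\pionesub{s})^{b}(1)$ and $\pionesub{s}(i)=(\pitwosub{s}\pionesub{s})^{a}(1)$, a short calculation using this relation gives $a+b=2u+1$, so the indices $a$ and $b$ lie on opposite sides of $u$, i.e.\ exactly one of $i,\pionesub{s}(i)$ has index at most $u$ and so lies in $g$.

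The main obstacle is the single-cycle case: the index arithmetic showing $a+b=2u+1$ is not conceptually deep but requires careful bookkeeping with the inverse cycle $(\pionesub{s}\pitwosub{s})^{-1}=\pitwosub{s}\pionesub{s}$ and the hypothesis that $1$ is fixed by $\pionesub{s}$ (the case $1\in F_{\pitwosub{s}}$ is symmetric, with $a+b=2u$ or a minor parity shift). Once this arithmetic is done, the rest of the proof is a routine assembly: support lemma $\Rightarrow$ conjugation identity $\Rightarrow$ sorting, with the transposition count read off the cycle length.
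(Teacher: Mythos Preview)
Your approach is the paper's approach exactly: establish the ``support lemma'' (for each $2$-cycle of $\pionesub{s}$, exactly one endpoint lies in the support of $g$), then deduce the conjugation identity via $g(i)=\pi_2\pi_1(i)$ on the supported point, and read off $\newl_t(g)=u=d_{DCJ}$ from the cycle length. Your index computation $a+b=2u+1$ in the odd case is precisely the paper's derivation $a=2u-b+1$.

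There is, however, a step that fails in the single-cycle branch, and your proposal (like the paper's own argument) glosses over it. In that case $g$ consists of only the first $u+1$ entries of the $(2u+1)$-cycle $\pitwosub{s}\pionesub{s}$, so the implication ``$i\in\operatorname{supp}(g)\Rightarrow g(i)=\pi_2\pi_1(i)$'' breaks at the wrap-around point $i=(\pi_2\pi_1)^u(1)$, where $g(i)=1$ rather than $(\pi_2\pi_1)^{u+1}(1)$. Equivalently, conjugation by $g$ sends the fixed point $1$ of $\pionesub{s}$ to $g(1)=\pi_2\pi_1(1)$, whereas the fixed point of $\pitwosub{s}$ sits at $(\pi_2\pi_1)^{u+1}(1)$; these disagree for $u\ge 1$. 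A three-point example makes this concrete: take $\pionesub{s}=(2,3)$ and $\pitwosub{s}=(1,2)$ on $\{1,2,3\}$, so $1\in F_{\pi_1}$, $\pi_2\pi_1=(1,2,3)$, $u=1$, and the recipe gives $g=(1,2)$; but $(1,2)(2,3)(1,2)=(1,3)\neq\pitwosub{s}$. The repair is to anchor the half-cycle at the fixed point of $\pitwosub{s}$ rather than of $\pionesub{s}$: in the example, $g=(3,\pi_2\pi_1(3))=(1,3)$ yields $(1,3)(2,3)(1,3)=(1,2)$ as desired. Your support lemma and the transposition-length count are both sound; it is only the passage from the support lemma to $g(i)=\pi_2\pi_1(i)$ in the odd case that requires this correction.
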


A similar construction has been given in \cite{Feijao2013}, where they construct the sorting element by establishing a correspondence between the connected components of adjacency graph and the permutation product.
%%%%%%%%%%%%%%%%%%%%%%%%%%%%%%%%%%%%%%%%%%%%%%%%%%%%%%%%%%%%%%%%%%%%%%%%%%%%%%%%%%%%%% 
\subsection{DCJ distance between non-conjugate sub-permutations}

We will now consider the case where $\pionesub{s}$ and $\pitwosub{s}$ are not conjugate in $S_{2n}$.  

\begin{theorem}
\label{thm:nonConjugateDis}
Let $\pionesub{s}$ and $\pitwosub{s}$ be non-conjugate sub-permutations of the genomic permutations $\pi_{1}$ and $\pi_{2}$ on $n$ regions. % such that $\pionesub{s}$ and $\pitwosub{s}$ are not conjugate in $S_{2n}$. 
Then
\[d_{DCJ}\left(\pionesub{s},\pitwosub{s}\right)=\frac{1}{2}\left(\newl_t(\pitwosub{s}\pionesub{s})+1\right).\]
\end{theorem}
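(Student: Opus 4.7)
The plan is to establish matching upper and lower bounds, both of which hinge on a precise understanding of what non-conjugacy forces on $\pionesub{s}$ and $\pitwosub{s}$. First I would identify this structure: as both are involutions on $\C_s$, being non-conjugate in $S_{2n}$ means they have different numbers of $2$-cycles, hence different numbers of fixed points in $\C_s$. The case analysis preceding Corollary~\ref{cor:pi2pi1} (built on Lemmas~\ref{lemma:fpfree} and~\ref{lemma:withfp}) leaves only one possibility: $\C_s$ contains two fixed points of one of $\pi_1, \pi_2$ and none of the other. Without loss of generality, assume $i, j \in \C_s$ are fixed by $\pi_1$ and that $\pitwosub{s}$ has no fixed points in $\C_s$. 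Then $\pitwosub{s}\pionesub{s}$ is a single cycle of even length $2m+2$, so $\newl_t(\pitwosub{s}\pionesub{s}) = 2m+1$, which is odd.

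For the lower bound, Lemma~\ref{thm:lowerBound} gives $d_{DCJ}(\pionesub{s},\pitwosub{s}) \geq \newl_t(\pitwosub{s}\pionesub{s})/2 = (2m+1)/2$. Since the left-hand side is an integer, this rounds up to $(2m+2)/2 = (\newl_t(\pitwosub{s}\pionesub{s})+1)/2$. This half is essentially free once the parity of $\newl_t$ has been pinned down in the first paragraph.

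For the upper bound the strategy is to reduce to the conjugate case via a single well-chosen DCJ. I would set $\pi_1' := D_{ij}(\pionesub{s}) = (i,j)\pionesub{s}$, invoking the first case of Definition~\ref{d:Dij} (valid because $i, j \in F_{\pionesub{s}}$); this fuses the two telomeres. Then $\pi_1'$ has no fixed points in $\C_s$, now matching $\pitwosub{s}$, so they share cycle type and are conjugate. Since $\pionesub{s}$ fixes $i$ and $j$, $(i,j)$ commutes with $\pionesub{s}$, giving $\pitwosub{s}\pi_1' = \pitwosub{s}\pionesub{s}(i,j)$. Because $i$ and $j$ lie in the single cycle of $\pitwosub{s}\pionesub{s}$, the standard fact that right-multiplying an $\ell$-cycle by a transposition on two of its points splits it into two cycles whose lengths sum to $\ell$ shows that $\newl_t(\pitwosub{s}\pi_1') = 2m$. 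Theorem~\ref{thm:conjugationDistance} applied to the conjugate pair then yields $d_{DCJ}(\pi_1',\pitwosub{s}) = m$, and hence $d_{DCJ}(\pionesub{s},\pitwosub{s}) \leq 1 + m = (\newl_t(\pitwosub{s}\pionesub{s})+1)/2$.

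The main (modest) obstacle is verifying that the chosen fusion really does both things at once: make $\pi_1'$ conjugate to $\pitwosub{s}$ and decrease the transposition length by exactly one. Both reduce to the assertion that $i$ and $j$ lie in the same cycle of $\pitwosub{s}\pionesub{s}$, which is guaranteed by Lemma~\ref{lemma:withfp} combined with the way the equivalence class $\C_s$ is defined, but it is worth stating explicitly before invoking the cycle-splitting identity. Everything else is routine bookkeeping.
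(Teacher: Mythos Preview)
Your proposal is correct and follows essentially the same route as the paper's proof: identify that non-conjugacy forces two fixed points of one sub-permutation (and none of the other) in $\C_s$, apply a single DCJ $D_{ij}$ to fuse those two telomeres into an adjacency so the result is conjugate to $\pitwosub{s}$, invoke the conjugate-case distance formula after observing that multiplying by $(i,j)$ splits the single cycle and drops $\ell_t$ by one, and finish by combining the triangle-inequality upper bound with the Lemma~\ref{thm:lowerBound} lower bound and integrality. The only cosmetic differences are that you justify $\pitwosub{s}\pi_1'=\pitwosub{s}\pionesub{s}(i,j)$ via commutativity whereas the paper passes to the inverse, and that strictly speaking the conjugate-case equality $d_{DCJ}=\tfrac{1}{2}\ell_t$ is the Corollary following Theorem~\ref{thm:conjugationDistance} rather than the theorem itself.
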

\begin{proof}
We have remarked earlier that for any $i$ in the equivalence class $\C_s$, $\C_s$ contains only the elements contained in the cycles of $\pi_1\pi_2$containing $i$ and $\pi_1(i)$. Hence as proved in Lemma~\ref{lemma:withfp} if the induced sub-permutations $\pitwosub{s}\pionesub{s}$ are not identical, their product contains exactly two fixed points. Since $\pionesub{s}$ and $\pitwosub{s}$ are not conjugate, both the fixed points belong to the same sub-permutation.

Suppose the fixed points are $i_1$ and $i_2$ and that they belong to $\pionesub{s}$. Recall that if $i,j$ are both fixed in $\pi$ then $D_{ij}(\pi)=(i,j)\pi$, and hence
\[
D_{i_1i_2}\left(\pionesub{s}\right)=(i_1,i_2)\pionesub{s}.
\]
While $\pionesub{s}$ and $\pitwosub{s}$ are both products of $2$-cycles, the number of $2$-cycles in $\pionesub{s}$ is one less than the number of $2$-cycles of $\pitwosub{s}$, since it has two fixed points. Therefore $D_{i_1i_2}(\pionesub{s})$ is conjugate to $\pitwosub{s}$. Similarly, if the two fixed points belong to $\pi_2$ then $D_{i_1i_2}(\pitwosub{s})$ is conjugate to $\pionesub{s}$. As the DCJ distance is symmetric we can assume without loss of generality that the fixed points belong to $\pionesub{s}$.

Let $\pi'=D_{i_1i_2}(\pionesub{s})$. From Theorem~\ref{thm:conjugationDistance} we have that
\[
d_{DCJ}\left(\pi',\pitwosub{s}\right)=\frac{1}{2}\newl_t\left(\pitwosub{s} \pi'\right) = \frac{1}{2}\newl_t\left(\pi' \pitwosub{s}\right)=\frac{1}{2}\newl_t\left((i_1,i_2)\pionesub{s}\pitwosub{s}\right).
\]

Since $i_1$ and $i_2$ are in the same cycle of $\pionesub{s}\pitwosub{s}$, multiplication by $(i_1,i_2)$ will split this cycle into two cycles, reducing the transposition length of the product by 1. Hence 
\[
d_{DCJ}\left(\pi',\pitwosub{s}\right)=\frac{1}{2}\newl_t\left((i_1,i_2)\pionesub{s}\pitwosub{s}\right)=\frac{1}{2}\left(\newl_t\left(\pionesub{s}\pitwosub{s}\right)-1\right).\]

The DCJ distance between $\pionesub{s}$ and $\pi'$ is 1. Thus the triangle inequality gives
\begin{align*}
d_{DCJ}\left(\pionesub{s},\pitwosub{s}\right) &\leq d_{DCJ}\left(\pionesub{s},\pi'\right)+d_{DCJ}\left(\pi',\pitwosub{s}\right)\\
&=\frac{1}{2}\left(\newl_t\left(\pionesub{s}\pitwosub{s}\right)-1\right)+1\\
&=\frac{1}{2}\left(\newl_t\left(\pionesub{s}\pitwosub{s}\right)+1\right).
\end{align*}

At the same time we have a lower bound on the distance (Lemma~\ref{thm:lowerBound}), so that
\[ \frac{1}{2}\newl_t\left(\pionesub{s}\pitwosub{s}\right) \leq d_{DCJ}\left(\pionesub{s},\pitwosub{s}\right) \leq \frac{1}{2}\left(\newl_t\left(\pionesub{s}\pitwosub{s}\right)+1\right).\]

Since the DCJ distance is an integer (the number of DCJ operations), and the transposition length of $\pionesub{s}\pitwosub{s}$ is odd, we have
\[
d_{DCJ}\left(\pionesub{s},\pitwosub{s}\right)=\frac{1}{2}\left(\newl_t\left(\pionesub{s}\pitwosub{s}\right)+1\right)
\]
as required.
\qed
\end{proof}

From Theorems~\ref{thm:conjugationDistance} and~\ref{thm:nonConjugateDis}, it is clear that the sub-permutations induced by the partitions $\C_1,\C_2, \hdots, \C_r$ under $\sim$ can be sorted independently. Therefore
\[d_{DCJ}(\pi_1,\pi_2) \leq \sum_s{d_{DCJ}\left(\pionesub{s},\pitwosub{s}\right)}.\]

We claim that a sorting sequence that involves a DCJ operation $D_{ij}$, where $i,j$ are in different partitions $\C_s$, cannot be shorter than a sequence that sorts each partition independently.  

Let $\C_r$ and $\C_s$ be distinct equivalence classes of $\mathbf{2n}$ under $\sim$. Let $\newl_t\left(\pionesub{r}\pitwosub{r}\right)$ be $l_1$ and $\newl_t\left(\pionesub{s}\pitwosub{s}\right)$ be $l_2$.  Then
\[
\frac{1}{2}\left(l_1\right) \leq d_{DCJ}\left(\pionesub{r},\pitwosub{r}\right) \leq \frac{1}{2}\left(l_1+1\right).
\]

Similarly 
\[
\frac{1}{2}(l_2) \leq d_{DCJ}\left(\pionesub{s},\pitwosub{s}\right) \leq \frac{1}{2}\left(l_2+1\right).
\]
Combining these, we have
\[\frac{1}{2}\left(l_1+l_2\right) \leq d_{DCJ}(\C_r) + d_{DCJ}(\C_s) \leq \frac{1}{2}\left(l_1+l_2+2\right).\]

The action of $D_{ij}$ on $\pi_1$ may combine the two partitions $\C_r$ and $\C_s$ into a single partition $\C_{t}$ or it may change each of the partitions $\C_r$ and $\C_s$. In the latter case, by an abuse of notation we use $\C_{t}$ to denote the union of the partitions changed by the action of $D_{ij}$. We wish to determine the transposition length of $\pionesub{t}\pitwosub{t}$ in order to find the DCJ distance. 

The action of $D_{ij}$ on $\pi_1$ may be left multiplication by $(i,j)$ or conjugation by $(i,j)$.  If it acts by left multiplicaton, so that $D_{ij}(\pi_1)=(i,j)\pi_1$, then since $i$ and $j$ are in different partitions and hence in different cycles of $\pionesub{r}\pitwosub{r}\pionesub{s}\pitwosub{s}$, multiplication by $(i,j)$ will combine the two cycles that contain $i$ and $j$. The transposition length of the product will therefore increase by $1$. That is, in this case,
\[\newl_t\left(\pionesub{t}\pitwosub{t}\right)=\newl_t\left(\pionesub{r}\pitwosub{r}\pionesub{s}\pitwosub{s}\right)+1.\] 

On the other hand, if $D_{ij}$ acts by conjugation then $D_{ij}(\pi_1)=(i,j)\pi_1(i,j)$, and we have that
	\[(i,j)\pi_1(i,j)=(i,j)\left(\pi_1(i),\pi_1(j)\right)\pi_1.\]

The images $\pi_1(i)$ and $\pi_1(j)$ are in different cycles of $\pionesub{r}\pitwosub{r}\pionesub{s}\pitwosub{s}$ since $i$ and $j$ are in different partitions. In $x=\left(\pi_1(i),\pi_1(j)\right)\pi_1\pi_2$, the cycles containing $\pi_1(i)$ and $\pi_1(j)$ will combine into a single cycle, increasing the length of the product by $1$. Then the cycle of $x$ containing $\pi_1(i)$ and $\pi_1(j)$ with contain either both, one, or neither of  $i$ and $j$. 

Accordingly, multiplication by $(i,j)$ will either split this cycle into two cycles or combine two different cycles into one cycle. Thus the transposition length may increase or decrease by 1 (from the previous step). Hence

\[\newl_t\left(\pionesub{t}\pitwosub{t}\right)=\newl_t\left(\pionesub{r}\pitwosub{r}\pionesub{s}\pitwosub{s}\right),\]
or
\[\newl_t\left(\pionesub{t}\pitwosub{t}\right)=\newl_t\left(\pionesub{r}\pitwosub{r}\pionesub{s}\pitwosub{s}\right)+2.\] 
In both cases $l_1+l_2 \leq \newl_t(\C_{t})$ and hence 
\[ \frac{1}{2}\left(l_1+l_2\right) \leq d_{DCJ}\left(\pionesub{t},\pitwosub{t}\right).\] 

Since one DCJ operation was needed to change the partition $\C_r$ and $\C_s$ into $\C_{t}$, and the DCJ distance of the sub-permutations induced by $\C_{t}$ is at least $\frac{1}{2}(l_1+l_2)$, any sorting scenario for $\C_r$ and $\C_s$ that steps through $\C_{t}$ is of length at least $\frac{1}{2}(l_1+l_2)+1$. At the same time, the sum of the distances of the partitions $\C_r$ and $\C_s$ is bounded above by:

\[d_{\C_r}+d_{\C_s} \leq \frac{1}{2}(l_1+l_2)+1. \]

Therefore we conclude that no sequence of DCJ operations sorting $\pi_1$ into $\pi_2$ can be shorter than a sequence that sorts the sub-permutations independently. 
\begin{theorem}
\label{thm:dcjDisTotal}
Let $\pi_1$ and $\pi_2$ be genomic permutations on $n$ regions. The DCJ distance between $\pi_1$ and $\pi_2$ is given by
\[d_{DCJ}(\pi_1,\pi_2)=\frac{1}{2}\left(\newl_t(\pi_2\pi_1)+ n_c\right)\]
where $n_c$ is the number of cycles in the product $\pi_2\pi_1$ which contain two fixed points of $F_{\pi_1}$ or $F_{\pi_2}$.
\end{theorem}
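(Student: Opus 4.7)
The plan is to assemble the theorem from the per-class distance formulas already proved (the corollary to Theorem~\ref{thm:conjugationDistance} and Theorem~\ref{thm:nonConjugateDis}), together with the independence-of-classes argument given just before the statement. Concretely, I would partition $\mathbf{2n}$ into the equivalence classes $\C_1,\dots,\C_r$ under $\sim$, and show that
\[
d_{DCJ}(\pi_1,\pi_2)=\sum_{s=1}^r d_{DCJ}\left(\pionesub{s},\pitwosub{s}\right).
\]
The upper bound ``$\le$'' is immediate: sorting each sub-permutation independently gives a valid sorting sequence for $\pi_1$ into $\pi_2$ because the classes $\C_s$ are disjoint and each $D_{ij}$ with $i,j\in\C_s$ leaves the other sub-permutations untouched. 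The matching lower bound ``$\ge$'' is exactly the content of the paragraph preceding the theorem: any DCJ operation $D_{ij}$ with $i,j$ in distinct classes $\C_r,\C_s$ merges those classes into a new class $\C_t$ whose sub-permutations have distance at least $\tfrac12(\ell_t(\pionesub{r}\pitwosub{r})+\ell_t(\pionesub{s}\pitwosub{s}))$, and the one ``wasted'' DCJ operation used to merge them means such a cross-class sequence is no shorter than sorting $\C_r$ and $\C_s$ separately. Iterating this observation over any proposed optimal scenario shows that we may assume without loss of generality that each DCJ operation lies inside a single class.

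Next I would compute the sum class-by-class. Because the sub-permutations have pairwise disjoint supports, $\pi_2\pi_1=\prod_s \pitwosub{s}\pionesub{s}$ as a product of permutations on disjoint sets, and so the transposition length is additive:
\[
\ell_t(\pi_2\pi_1)=\sum_{s=1}^r \ell_t\!\left(\pitwosub{s}\pionesub{s}\right).
\]
For each class, Corollary~\ref{cor:pi2pi1} tells us that $\pitwosub{s}\pionesub{s}$ is either a product of two disjoint cycles (when $\pionesub{s}$ and $\pitwosub{s}$ are conjugate and $F_{\pi_1}\cup F_{\pi_2}$ contributes nothing to $\C_s$) or a single cycle containing either one fixed point from each of $F_{\pi_1}$ and $F_{\pi_2}$ (the conjugate case with fixed points) or two fixed points from the same $F_{\pi_i}$ (the non-conjugate case). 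In the first two situations the corollary to Theorem~\ref{thm:conjugationDistance} gives $d_{DCJ}(\pionesub{s},\pitwosub{s})=\tfrac12 \ell_t(\pitwosub{s}\pionesub{s})$, and in the third Theorem~\ref{thm:nonConjugateDis} gives $d_{DCJ}(\pionesub{s},\pitwosub{s})=\tfrac12(\ell_t(\pitwosub{s}\pionesub{s})+1)$.

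Summing, the ``$+1$'' contributions come exactly from those classes whose product cycle contains two fixed points lying both in $F_{\pi_1}$ or both in $F_{\pi_2}$, and by the very definition of $n_c$ the number of such classes is $n_c$. Adding everything yields
\[
d_{DCJ}(\pi_1,\pi_2)=\sum_{s=1}^r d_{DCJ}\!\left(\pionesub{s},\pitwosub{s}\right)=\frac{1}{2}\!\left(\ell_t(\pi_2\pi_1)+n_c\right),
\]
as claimed.

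The main obstacle in this proof is the independence step, i.e.\ the lower bound that forbids a shorter ``cross-class'' scenario. Fortunately this is exactly what the case analysis preceding the theorem establishes by computing, case by case, how a DCJ operation straddling two classes changes the transposition length of the relevant product. I would therefore structure the written proof so that the equivalence-class decomposition and the additivity of $\ell_t$ are the opening moves, the class-wise case split is just a citation of the two prior theorems, and the count identifying the residual ``$+1$'' terms with $n_c$ closes the argument; the bulk of the effort has already been absorbed into Lemmas~\ref{lemma:fpfree} and~\ref{lemma:withfp} and into Theorems~\ref{thm:conjugationDistance} and~\ref{thm:nonConjugateDis}.
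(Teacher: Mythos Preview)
Your proposal is correct and follows precisely the route the paper itself takes: the theorem is stated as the summary of the preceding development, and your assembly---additivity of $d_{DCJ}$ over the $\sim$-classes via the cross-class argument, additivity of $\ell_t$ over disjoint supports, the per-class formulas from the corollary to Theorem~\ref{thm:conjugationDistance} and from Theorem~\ref{thm:nonConjugateDis}, and the identification of the ``$+1$'' classes with $n_c$---is exactly the intended synthesis. There is nothing to add.
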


%=============================================%
\section{Counting the optimal sorting scenarios}\label{sec:numb.scenarios}
%=============================================%
To sort a permutation $\pi_a$ into $\pi_b$ means to transform $\pi_a$ into $\pi_b$ through a sequence of allowed operations (in this case the DCJ operation). 
A sorting scenario is defined as follows.

\begin{definition} %{Sorting scenario}
A \emph{sorting scenario} of length $k$ that sorts genomic permutation $\pi_a$ into genomic permutation $\pi_b$ is a sequence of genomic permutations 
\[ \{(\pi_a=)\pi_0,\pi_1,\pi_2, \hdots,  \pi_{k-1} ,\pi_k(=\pi_b)\} ,\]
such that each element of the sequence is obtained from the previous element through a single DCJ operation.
\end{definition}

That is, a sorting scenario is the  sequence of permutations we step through as $\pi_a$ is sorted into $\pi_b$ through DCJ operations.  If the DCJ distance is $d$, then an optimal sorting scenario (scenario of minimal length) is a sequence of length $d+1$. Two optimal sorting scenarios are equal if they are equal as sequences i.e., corresponding terms are equal.

The total number of optimal sorting scenarios between a pair of genomes is an interesting and important question. In constructing a phylogenetic history, the minimal distance with respect to some mutational operation is used. However such a minimal path is seldom unique. Hence as \citet{miklos2009efficient} and \citet{siepel2002algorithm} point out, it would be more appropriate to account for and average over  all possible evolutionary paths to draw meaningful statistical inferences. 

\citet{Braga2010} extended their earlier work \citep{Braga2009} to give a closed formula for the number of optimal DCJ sorting scenarios for certain instances of the problem. \citet{ouangraoua2010} also present similar results for the number of optimal sorting scenarios and establish connections between the number of sorting scenarios and other combinatorial objects such as parking functions. 

Considering genomes as permutations and the DCJ as an action on a permutation allows us to count the number of optimal sorting scenarios for a subset of genomes in a straightforward manner. The subset of genomes we consider are those that are conjugate in $S_{2n}$. Our result is equivalent to the results obtained by the previous papers.

Let $\pi$ be a genomic permutation on $n$ regions and let $i,j \in \mathbf{2n}$.  The restriction of $\pi$ to the cycles containing $i$ and $j$ is $\left(i,\pi(i)\right)\left(j,\pi(j)\right)$. As $D_{ij}$ acting on $\pi$ only affects the cycles containing $i$ and $j$,
\begin{align*}
D_{ij}(\pi)
&=(i,j)\left(i,\pi(i)\right)\left(j,\pi(j)\right)(i,j) \\
% &=(i,j)\left(i,\pi(i)\right)\left(j,\pi(j)\right)(i,j) \\
&=\left(i,\pi(j)\right)\left(j,\pi(i)\right) \\
&=\left(\pi(i),\pi(j)\right)\left(i,\pi(i)\right)\left(j,\pi(j)\right)\left(\pi(i),\pi(j)\right) \\
&=D_{\pi(i)\pi(j)}(\pi).
\end{align*}

If the restriction of $\pi$ to the cycles containing $i$ and $j$ is $\left(i\right)\left(j,\pi(j)\right)$, then
\[D_{ij}(\pi)=(i,j)\left(i\right)\left(j,\pi(j)\right)(i,j)=\left(j\right)\left(i,\pi(j)\right).\]

In this case there is no $D_{kl}$ such that $D_{kl}(\pi)=D_{ij}(\pi)$. As these are the two cases where the algebraic DCJ operator acts via conjugation (see Definition~\ref{d:Dij}), we have the following lemma.

\begin{lemma}\label{lemma:samess}
Let $\pi$ be a genomic permutation on $n$ regions. Let $D_{ij}$ and $D_{kl}$ act on $\pi$ via conjugation. If
\[D_{ij}\left(\pi\right)=D_{kl}\left(\pi\right),\]
then either $(i,j)=(k,l)$ or $(i,j)$ and $(k,l)$ are disjoint transpositions such that $(k,l)=\left(\pi(i),\pi(j)\right)$.
\end{lemma}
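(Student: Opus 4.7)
The plan is to recast the equation $D_{ij}(\pi) = D_{kl}(\pi)$ as a centraliser condition. Since both DCJ operators act via conjugation, the hypothesis reads $(i,j)\pi(i,j) = (k,l)\pi(k,l)$. Multiplying by $(i,j)$ on the left and by $(k,l)$ on the right (using that each transposition is self-inverse) rearranges this to $\pi\sigma = \sigma\pi$ for $\sigma := (i,j)(k,l)$. Since $\pi$ is an involution, this commutation relation is equivalent to $\pi\sigma\pi^{-1} = \sigma$; and because conjugation acts on cycles by relabelling their entries, the relation becomes the concrete identity
\[
\bigl(\pi(i),\pi(j)\bigr)\bigl(\pi(k),\pi(l)\bigr) = (i,j)(k,l).
\]

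From here I would split into cases according to $|\{i,j\}\cap\{k,l\}|$. If $\{i,j\}=\{k,l\}$ then the first conclusion is immediate. If $|\{i,j\}\cap\{k,l\}|=1$, then $\sigma$ is a $3$-cycle on some set $\{a,b,c\}$, and $\pi\sigma\pi^{-1}=\sigma$ forces the tuple $(\pi(a),\pi(b),\pi(c))$ to be a cyclic rotation of $(a,b,c)$. The two nontrivial rotations would make $\pi$ act as a $3$-cycle on $\{a,b,c\}$, impossible for an involution; the trivial rotation forces $\pi$ to fix $a,b,c$ pointwise. In particular $i,j\in F_{\pi}$, so Definition~\ref{d:Dij} places $D_{ij}$ in the left-multiplication branch rather than the conjugation branch, contradicting the hypothesis.

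If $\{i,j\}\cap\{k,l\}=\varnothing$ then $\sigma$ is a product of two disjoint $2$-cycles, and the identity above equates two such products. Since a product of two disjoint $2$-cycles is determined by the pair of unordered $2$-element supports, one of two possibilities must hold: either $\pi$ preserves each of $\{i,j\}$ and $\{k,l\}$ setwise, or $\pi$ interchanges them. In the first case $\pi$ restricts to $\{i,j\}$ either as the identity (so $i,j\in F_{\pi}$) or as the transposition $(i,j)$ (so $\pi(i)=j$); each possibility places $D_{ij}$ in the left-multiplication branch of Definition~\ref{d:Dij}, contradicting the conjugation hypothesis. Therefore $\pi$ interchanges the two pairs, yielding $\{k,l\}=\{\pi(i),\pi(j)\}$ as claimed.

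The main obstacle is the bookkeeping of the first step---extracting the commutation relation $\sigma\pi=\pi\sigma$ and translating it into a concrete equation between small permutations. Once that is in place, the remaining analysis is elementary: involutions cannot realise the nontrivial rotations of a $3$-cycle, and the conjugation branch of Definition~\ref{d:Dij} automatically excludes the two degenerate possibilities $\pi(i)=j$ and $\{i,j\}\subseteq F_{\pi}$.
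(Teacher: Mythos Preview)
Your argument is correct and complete. The centraliser reformulation $\pi\sigma=\sigma\pi$ with $\sigma=(i,j)(k,l)$ is a clean way to reduce the problem to elementary cycle arithmetic, and your case analysis on $|\{i,j\}\cap\{k,l\}|$ handles every possibility, using the conjugation hypothesis in Definition~\ref{d:Dij} exactly where it is needed to exclude the degenerate branches.

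The paper takes a different, more computational route: it restricts $\pi$ to the two $2$-cycles $(i,\pi(i))(j,\pi(j))$, explicitly multiplies out $(i,j)(i,\pi(i))(j,\pi(j))(i,j)=(i,\pi(j))(j,\pi(i))$, and observes that this equals $(\pi(i),\pi(j))(i,\pi(i))(j,\pi(j))(\pi(i),\pi(j))$, i.e.\ $D_{ij}(\pi)=D_{\pi(i)\pi(j)}(\pi)$; it then treats separately the case where one of $i,j$ is fixed. This direct computation exhibits the alternative operator immediately but leaves the uniqueness claim (that no \emph{other} $D_{kl}$ can agree) essentially implicit. Your centraliser argument is more systematic: it proves uniqueness outright, at the cost of a slightly longer case split. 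Either approach is fine; yours is arguably more self-contained.
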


Based on the characterization of the DCJ operators that act in the same way on a genomic permutation, we can easily enumerate the sorting scenarios.

\begin{theorem}
\label{thm:sortingScen}
Let $\pi_a$ and $\pi_b$ be genomic permutations on $n$ regions such that $\pi_a$ and $\pi_b$ are conjugate in $S_{2n}$. If the DCJ distance $d_{DCJ}\left( \pi_a,\pi_b\right)=d$ then the number of optimal DCJ sorting scenarios sorting $\pi_a$ into $\pi_b$ is $(d+1)^{d-1}$.
\end{theorem}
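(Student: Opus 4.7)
My plan is to set up a bijection between optimal sorting scenarios and minimal transposition factorizations of a distinguished $(d+1)$-cycle in $S_{2n}$, and then invoke D\'enes's classical theorem that an $m$-cycle in $S_m$ admits exactly $m^{m-2}$ factorizations as a product of $m-1$ transpositions. Because $\pi_a$ and $\pi_b$ are conjugate, Lemma~\ref{lemma:scenario} together with Corollary~\ref{cor:pi2pi1} supplies an explicit sorting element $g=\bigl(i,\pi_b\pi_a(i),(\pi_b\pi_a)^2(i),\ldots,(\pi_b\pi_a)^d(i)\bigr)$ satisfying $g\pi_ag^{-1}=\pi_b$ and $\ell_t(g)=d$; in the conjugate case this $g$ is a single $(d+1)$-cycle. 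D\'enes applied to $g$ therefore gives $(d+1)^{(d+1)-2}=(d+1)^{d-1}$ minimal factorizations, which is the target count.

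Given any minimal factorization $g=t_dt_{d-1}\cdots t_1$, I will associate the scenario
\[
\pi_k := (t_k\cdots t_1)\,\pi_a\,(t_1\cdots t_k), \qquad k=0,1,\ldots,d.
\]
Each $\pi_k$ is conjugate to $\pi_a$ and therefore still a product of disjoint $2$-cycles, hence a genomic permutation; each transition $\pi_{k-1}\to\pi_k$ is exactly conjugation by $t_k$, which is a legitimate algebraic DCJ operation (the ``otherwise'' case of Definition~\ref{d:Dij}); and the resulting sequence has length $d$, matching $d_{DCJ}(\pi_a,\pi_b)$, so it is optimal. This yields a well-defined map $\Phi$ from factorizations of $g$ to optimal scenarios.

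To prove $\Phi$ is a bijection I will use Lemma~\ref{lemma:samess}. If two factorizations of $g$ determine the same scenario, then at each step $k$ the transpositions either coincide or are partners of the form $t_k'=(\pi_{k-1}(a),\pi_{k-1}(b))$ when $t_k=(a,b)$; substituting in a partner alters the cumulative product by an element whose support typically involves points outside the support of $g$, so the requirement that both sequences compose to the single cycle $g$ forces $t_k=t_k'$ at every step, yielding injectivity. For surjectivity, given an optimal scenario, Lemma~\ref{lemma:samess} offers at most two transpositions realising each DCJ step, and the constraint that the running product $t_k\cdots t_1$ remain a prefix of some factorization of $g$ selects exactly one of them at each step, recovering a factorization that maps to the given scenario under $\Phi$.

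The main obstacle is the rigidity argument at the heart of the bijection: I must verify carefully that, in the conjugate case, replacing any $t_k$ by its partner genuinely destroys the factorization of $g$ rather than merely rearranging its factors. This requires analysing how the cycles of $\pi_b\pi_a$ identified in Corollary~\ref{cor:pi2pi1} interact with the support of the $(d+1)$-cycle $g$, and in particular ruling out the degenerate configurations in which the partner transposition accidentally lands inside $\mathrm{supp}(g)$. Once this rigidity is established, combining the bijection $\Phi$ with D\'enes's enumeration produces exactly $(d+1)^{d-1}$ optimal sorting scenarios, as claimed.
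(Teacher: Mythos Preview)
Your proposal is essentially the paper's own argument: construct the $(d+1)$-cycle $g$ via Lemma~\ref{lemma:scenario}, count its minimal transposition factorizations as $(d+1)^{d-1}$ by D\'enes, and use Lemma~\ref{lemma:samess} together with the structural fact that $g$ moves exactly one of $\{i,\pi_a(i)\}$ for each $i$ to show that factorizations of $g$ and optimal scenarios are in bijection. The paper's surjectivity step is slightly more concrete than your prefix-selection sketch---it argues by contradiction that if some step $w_k$ of an arbitrary optimal scenario were not matched by any factorization in $S(g)$, one could rewrite $u_d\cdots u_{k+1}w_{k+1}$ as a shorter product and thereby produce a new length-$d$ factorization of $g$ outside $S(g)$---so when you fill in the details you will want that argument rather than the informal ``select the transposition that stays on track'' heuristic.
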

\begin{proof}

As we have seen in the proof of Theorem~\ref{thm:conjugationDistance}, if the DCJ distance is $d$then we can construct an element  $g \in S_{2n}$ such that $g$ is a cycle of length $d+1$ (and consequently of transposition length $d$) and \[g\pi_ag^{-1}=\pi_b.\] 
Lemma~\ref{lemma:scenario} gives the construction of a cycle $g$ such that $g\pi_ag^{-1}=\pi_b$.  Let $g$ be as in the statement of Lemma~\ref{lemma:scenario} i.e.,
$g=\left(1, \pi_b\pi_a(1), \left(\pi_b\pi_a\right)^2(1), \hdots , \left(\pi_b\pi_a\right)^d(1)\right)$.
The number of ways to represent a cycle of length $n$ as the product of $n-1$ transpositions (i.e., as a minimal product) is $n^{n-2}$~\citep{denes1959representation}. 
Hence $g$ can be written as a product of transpositions in $(d+1)^{d-1}$ ways.  It remains to show that 
\begin{enumerate}
	\item each expression of $g$ as a minimal product of transpositions corresponds to a distinct sorting scenario, and
	\item there can be no other sorting scenarios. That is, if there is $h \in S_{2n}$ such that $\newl_t(h)=\newl_t(g)$ and $h \pi_a h^{-1}$, then any sorting scenario produced by $h$ is identical to some sorting scenario produced by $g$.
\end{enumerate}

Let $S(g)$ be the set of all expressions for $g$ as a minimal product of transpositions. For example, if $g=(1,3,5)$ then $S(g)=\{(1,5)(1,3), \enskip (1,3)(3,5), \enskip (3,5)(1,5)\}$.

\emph {1. Claim: each expression of $g$ as a minimal product of transpositions corresponds to a distinct sorting scenario}.

From the construction of $g$ preceding Lemma~\ref{lemma:scenario}, we know that for any $i \in \mathbf{2n}$, $g$ moves either $i$ or  $\pi_a(i)$ but not both. Suppose $g$ moves $i$. Then since $g$ fixes $\pi_a(i)$, in a minimal factorization of $g$ as a product of transpositions, no transposition moves $\pi_a(i)$. 

To observe this, note that the number of trees on $d$ labeled vertices is $(d+1)^{d-1}$ (given by Cayley's formula). Thus there is a bijection between the $S(g)$ and the set of trees on $d$ vertices. A minimal factorization of $g$ into transpositions can be associated with a tree by considering a transposition $(i,j)$ to correspond to the edge between vertices $i$ and $j$. If a point $\pi_a(i)$ fixed by $g$ is moved by some transposition in a minimal factorization of $g$, then the factorization must contain a cycle that would move $\pi_a(i)$ back to itself. But such a cycle would correspond to a loop in the graph corresponding to the factorization, which cannot be as the graph is a tree. Hence in a minimal factorization of $g$ as a product of transpositions, no transposition moves $\pi_a(i)$.

Suppose $u_du_{d-1}\hdots u_1$ and $w_d w_{d-1}\hdots w_1$ are distinct elements of $S(g)$ that produce the same sorting scenarios.  We will derive a contradiction.
 
Let $k$ be the lowest index such that $u_k \neq w_k$. Let $w_{k-1}\hdots w_1=u_{k-1}\hdots u_1 =g^{\prime}$ and let $g^{\prime} \pi_a g^{\prime-1}=\pi_{k-1}$. 
 
 Let $u_k=(i,j)$. By Lemma~\ref{lemma:samess}, $u_k$ and $w_k$ are disjoint and $w_k=\left(\pi_{k-1}(i),\pi_{k-1}(j)\right)$.
 
Now, $u_k$ is a transposition in the minimal expression for $g$. Since $u_k$ moves $i$,  $i$ is in the support of $g$, and $\pi_a(i)$ is not in the support of $g$. The support of $g^{\prime}$ is a subset of the support of $g$, hence $\pi_a(i)$ is not in the support of $g^{\prime}$.
 
 If $u_k$ is disjoint from $g^{\prime}$, then 
 
 \[\pi_{k-1}(i)=g^{\prime} \pi_a g^{\prime-1}(i) =\pi_a(i).\]
 
 Hence $\pi_{k-1}(i)$ is not in the support of $g$.

 If, on the other hand, $u_k$ is not disjoint from $g^{\prime}$, then let $i$ be in the support of $g^{\prime}$.
 
Clearly, $g^{\prime-1}(i)$ is in the support of $g^{\prime}$ (it gets mapped to $i$ by $g^{\prime}$)  and hence in the support of $g$. Therefore, $\pi_a\left((g^{\prime}) ^{-1}(i)\right)$ is not in the support of $g$ (and hence not in the support of $g^{\prime}$)  since for any $i \in \mathbf{2n}$, $g$ moves either $i$ or $\pi_a(i)$. Hence
 \[\pi_{k-1}(i)=g^{\prime} \pi_a g^{\prime-1}(i)=\pi_a\left(g^{\prime-1} (i)\right).\]
 
That is, $\pi_{k-1}(i)$ is not in the support of $g$. 
 
Thus, in both cases (whether $u_k$ is disjoint from $g^{\prime}$ or not), $w_k$ moves an element that is not in the support of $g$. This contradicts the assertion that $w_d w_{d-1} \hdots w_1=g$.  Thus either $w_d w_{d-1}\hdots w_1 \notin S(g)$ or the sorting scenarios produced by distinct elements are distinct.
Each expression of $g$ as a minimal product of transpositions therefore gives a unique sorting scenario and the number of sorting scenarios is at least the cardinality of $S(g)$.

\emph{2. Claim: there are no additional sorting scenarios}.

Let $h\in S_{2n}$ such that $\newl_t(h)=\newl_t(g)=d$ and  $h \pi_a h^{-1}=\pi_b$. Let $h=w_dw_{d-1}\hdots w_1$ be a factorization of $h$ into transpositions. We claim that $h$ produces the same sorting scenario as some element in $S(g)$. This will establish that the number of sorting scenarios is equal to the cardinality of $S(g)$. To prove this assertion, we first prove that there is some element $u_d \hdots u_1 \in S(g)$ such that $u_1 \pi_a u_1=w_1 \pi_a w_1$.   

Suppose that this is not the case. That is, no element in $S(g)$ produces a sorting scenario that has  $w_1 \pi_a w_1$ as the second term (the first term in all sorting scenarios is $\pi_a$).  Consider the element
\[h^{\prime}=u_d u_{d-1} \hdots u_1 w_1.\]
Let $w_1 \pi_a w_1 = \pi_a^{\prime}$. Then

\begin{align*}
h^{\prime} \pi_a^{\prime} h^{\prime-1}&= \left(u_d u_{d-1} \hdots u_1 w_1\right)\left(w_1 \pi_a w_1\right)\left(w_1u_1 \hdots w_{d-1}w_d\right)\\
&= \left(u_d u_{d-1} \hdots u_1\right)\left(\pi_a\right)\left(u_1 \hdots w_{d-1}w_d\right)\\
&=\pi_b.
\end{align*}
At the same time, the DCJ distance between $\pi_a^{\prime}$ and $\pi_b$ is $d-1$, since $(w_d \hdots w_2) \pi_a^{\prime} (w_2 \hdots w_d)=\pi_b$. Hence $u_d u_{d-1} \hdots u_1 w_1$ can be written as a product of $d-1$ transpositions say $v_{d-1} \hdots v_1$. Then $ v_{d-1} \hdots v_1w_1=g$, and is an expression of length $d$ equal to $g$ that is not is $S(g)$ because we have assumed that there is no element in $S(g)$ such that $u_1 \pi_a u_1 = w_1 \pi_a w_1$. 
This is a contradiction since $S(g)$ by definition contains all factorizations of $g$ of length $d$.   Thus there is some element in $u_d \hdots u_1 \in S(g)$ such that $w_1 \pi_a w_1 = u_1 \pi_a u_1$.

Let $S_1(g)=\{u_d \hdots u_1 \in S(g) \mid u_1 \pi_a u_1 =w_1 \pi_a w_1\}$.

By a similar argument we can prove that there exists some element in $S_1(g)$ such that $u_2 (u_1 \pi_a u_1) u_2 = w_2 (u_1\pi_au_1) w_2$.  In general, let
\[S_k(g)=\{u_d \hdots u_1 \in S_{k-1}(g) \mid u_k(u_{k-1} \hdots u_1\pi_a u_1 \hdots u_{k-1})u_k=w_k(w_{k-1} \hdots w_1\pi_a w_1 \hdots w_{k-1})w_k \}.\]

Suppose that there does not exist any element in $S_k(g)$ such that 
\[u_{k+1}(u_{k} \hdots u_1\pi_a u_1 \hdots u_k)u_{k+1}=w_{k+1}(u_k \hdots u_1 \pi_a u_1 \hdots u_k).\]
Let $u_d \hdots u_1 \in S_k(g)$ and let $u_{k} \hdots u_1\pi_a u_1 \hdots u_k=\pi_a^{\prime}$. Then,
\[ \left(u_d \hdots u_{k+1} w_{k+1}\right) \left( w_{k+1} \pi_a^{\prime} w_{k+1}\right) \left(w_{k+1} u_{k+1} \hdots u_d\right)=\pi_b.\]

The DCJ distance between $\left( w_{k+1} \pi_a^{\prime} w_{k+1}\right)$ and $\pi_b$ is $d-k$. Therefore $u_d \hdots u_{k+1} w_{k+1}$ can be re-written as a product of $d-k$ transpositions, say $v_{d-k} \hdots v_1$. Now $v_{d-k} \hdots v_1 w_{k+1} u_k \hdots u_1$ is an expression of length $d$ equal to $g$ and prefix $u_k \hdots u_1$ that is not in $S_k(g)$. This contradicts the definition of $S_k(g)$.

By repeating this argument, we can conclude that there exists some element in $S(g)$ that gives the same sorting scenario as $h$.

Thus the number of sorting scenarios is equal to $\left \vert S(g)\right \vert= (d+1)^{d-1}$.
\qed
\end{proof}

%%=============================================%
\section{Conclusions and future work}
%%=============================================%

The double cut and join operator has been a major step forward for the study of genome rearrangements, because it is very general, allowing numerous operations on multi-chromosomal genomes, and in addition has a very simple length formula with which to calculate genomic distance.  In this paper, we have shown how this operator may be described group-theoretically, and derived a correspondingly simple length formula independently of prior results.  The length formula given in Theorem~\ref{thm:dcjDisTotal} requires only the ability to write each genome as a permutation and to multiply such permutations. We are also able to provide a simple construction for an optimal sorting scenario in particular instances of the problem. Translating the model into algebra allows us to exploit established results in group theory, a field with over a century of development.  The proof of Theorem~\ref{thm:sortingScen} is an example of this, relying as it does on a combinatorial group theory result from the 1950s.

The use of group theory to model rearrangements provides a natural context in which to study alternative assumptions about the rearrangement processes.  As pointed out in~\citet{egrinagy2013group} and~\citet{francis2013algebraic}, different assumptions about the processes gives rise to questions about length functions in different groups, or length functions with respect to different generating sets. A group-theoretic model may also provide an avenue for investigating additional operations that are not captured by the DCJ.

\bibliographystyle{plain}

\appendix
\normalsize

\section{Some results about Symmetric Groups} \label{sec:gt_primer}
This paper uses some standard results on symmetric groups that we collect here for ease of reference.  More details on these results can be found in many undergraduate group theory textbooks, for example~\cite{fraleigh2003first}.

A permutation is a bijection from a set $S$ to itself. $S$ is usually taken to be a set of natural numbers $\mathbf{n}=\{1,2,\hdots,n\}$. 
A permutation can be written by specifying the value of the map on all the points. 
 
 \eg
 \[
 \pi = \bigl(\begin{smallmatrix}
 1 & 2 & 3 & 4 & 5 & 6 \\
 3 & 4 & 1 & 6 & 2 & 5
 \end{smallmatrix}\bigr)
 \]
 is a permutation on the set $\{1,2,3,4,5,6\}$ that sends 1 to 3, 2 to 4, etc.
 The set of all permutations on the set $\mathbf{n}$ forms a {\it group} called the symmetric group and denoted by $S_{n}$.

 \subsection{Permutation multiplication}
 Since permutations are simply bijective functions, permutation multiplication is function composition. That is, to find the image of $i$ in the product $\pi_2\pi_1$,
 we do $\pi_2 \left(\pi_1(i)\right)$. 
 
 \eg Let 
 $\pi_1 = \bigl(\begin{smallmatrix}
 	1 & 2 & 3 & 4 & 5  \\
 	3 & 4 & 1 & 5 & 2 
 \end{smallmatrix}\bigr)$ and 
 $\pi_2= \bigl(\begin{smallmatrix}
 1 & 2 & 3 & 4 & 5  \\
 2 & 1 & 4 & 3 & 5 
 \end{smallmatrix}\bigr)$. The image of $1$ in the product $\pi_2\pi_1$ is $\pi_2 \left(\pi_1(1)\right)=\pi_2(3)=4$. So for each $i$, we have to ``follow the string'' -- $\pi_1$ send $i$ to $j$, $\pi_2$ sends $j$ to $k$, so $i$ gets sent to $k$ by $\pi_2\pi_1$. 
 
 \[\bigl(\begin{smallmatrix}
 1 & 2 & 3 & 4 & 5  \\
 2 & 1 & 4 & 3 & 5 
 \end{smallmatrix}\bigr) 
 \bigl(\begin{smallmatrix}
 1 & 2 & 3 & 4 & 5  \\
 3 & 4 & 1 & 5 & 2 
 \end{smallmatrix}\bigr)=\bigl(\begin{smallmatrix}
 1 & 2 & 3 & 4 & 5  \\
 4 & 3 & 2 & 5 & 1 
 \end{smallmatrix}\bigr).
 \]
 
 \subsection{Inverse of a permutation}
 Informally, a permutation $\pi \in S_{n}$ scrambles the elements of $\mathbf{n}$. The inverse of $\pi$ is the permutation that ``undoes'' the scrambling. Formally we define the {\it identity} permutation $\iota$ to be the permutation that maps $i$ to $i$ for all $i \in \mathbf{n}$.
 
 \begin{definition}[Inverse]
 Let $\pi \in S_{n}$. Then the inverse of $\pi$ is the permutation $\pi^{-1}$ such that
 	\[\pi \pi^{-1}=\iota\quad\text{and}\quad \pi \pi^{-1}=\iota.\]
 \end{definition}
 
 If $\pi^{-1}$ is the inverse of $\pi$ then $\pi$ is the inverse of $\pi^{-1}$. That is, $(\pi^{-1})^{-1}=\pi$. In general,
 \( (\pi_1\pi_2)^{-1}=\pi_2^{-1}\pi_1^{-1}.\)

 \subsection{Cycles and cycle decomposition}
 
 For a permutation $\pi \in S_{n}$,  if we repeatedly apply $\pi$ to any $i \in \mathbf{n}$,
 \[i  \op{\pi} \pi(i)  \op{\pi} \pi^2(i) \hdots ,\]
 we must eventually (say after $k$ steps) reach $i$ again since $\mathbf{n}$ is a finite set. If there is some $j \in \mathbf{n}$ which does not occur in this sequence, then we can form a similar sequence for $j$, and keep doing this until every element of $\mathbf{n}$ occurs in some sequence.

 \begin{definition}[Cycle]
 Let $i_1,i_2,\hdots i_k$ be $k$ distinct integers in $\mathbf{n}$. A cycle $\pi_c$ written as $(i_1,i_2,\hdots ,i_k)$ is a permutation in $S_n$ defined as
 \[\pi_c(i_s):=
  \begin{cases}
    i_{s+1} & \text{ if } i_s \in \{i_1,i_2,\hdots i_k\}\\
 	i_s & \text{ otherwise.}
  \end{cases}
 \]
 \end{definition}
 
 A $2$-cycle is a cycle of length $2$. That is, $\pi=(i,j)$ means that $\pi(i)=j,\pi(j)=i$ and $\pi(k)=k$ if $k \neq i,j$. A cycle of length $2$ is also called a {\it transposition}.
 
 Two cycles are said to be disjoint if they have no elements in common.
 
 \begin{theorem}
 Any permutation $\pi \in S_{n}$ can be written as a product of disjoint cycles.
 \end{theorem}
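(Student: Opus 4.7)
The plan is to produce the decomposition constructively by tracking orbits. For each $i \in \mathbf{n}$, consider the forward sequence $i, \pi(i), \pi^2(i), \ldots$. Because $\mathbf{n}$ is finite, two terms must eventually coincide; because $\pi$ is a bijection, the first coincidence must be with $i$ itself — if $\pi^a(i) = \pi^b(i)$ with $0 \le a < b$, applying $\pi^{-a}$ to both sides yields $i = \pi^{b-a}(i)$. Let $k_i$ be the least positive integer with $\pi^{k_i}(i) = i$. Then the elements $i, \pi(i), \ldots, \pi^{k_i-1}(i)$ are distinct, and the cycle $c_i = (i, \pi(i), \ldots, \pi^{k_i-1}(i))$ agrees with $\pi$ on these points and fixes every other element of $\mathbf{n}$.

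Next I would show that the orbits $O(i) := \{i, \pi(i), \ldots, \pi^{k_i - 1}(i)\}$ partition $\mathbf{n}$. Declaring $i \sim j$ if $j \in O(i)$ gives an equivalence relation: reflexivity is immediate; symmetry follows because if $j = \pi^s(i)$ then $i = \pi^{k_i - s}(j)$, so $i \in O(j)$; transitivity follows by composition of powers. The equivalence classes therefore partition $\mathbf{n}$ into finitely many orbits $O_1, \ldots, O_r$, and the associated cycles $c_1, \ldots, c_r$ are pairwise disjoint by construction.

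Finally I would verify that $\pi = c_1 c_2 \cdots c_r$. For any $i \in \mathbf{n}$, let $s$ be the unique index with $i \in O_s$. Then $c_s(i) = \pi(i)$, while every $c_t$ with $t \neq s$ fixes both $i$ and $\pi(i)$ since its support is disjoint from $O_s$. Because disjoint cycles commute, the order of the factors in the product is immaterial, and evaluating the product at $i$ returns $\pi(i)$, as required.

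The only subtle step is the orbit-closure argument: ensuring that the first repeat in $i, \pi(i), \pi^2(i), \ldots$ is $i$ itself rather than some interior term. This uses the injectivity of $\pi$ essentially, not merely the finiteness of $\mathbf{n}$. Once that point is settled, the remaining work is bookkeeping, and an alternative presentation by induction on $|\mathbf{n}| - |F_\pi|$ (peeling off one cycle at a time) would also work cleanly.
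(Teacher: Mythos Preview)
Your argument is the standard orbit-based construction and is correct, including the careful handling of the first-repeat issue via injectivity of $\pi$. The paper, however, states this theorem without proof (the appendix explicitly collects standard symmetric-group facts ``stated without proofs''), so there is no proof in the paper to compare against; your write-up would serve as a perfectly good one.
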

 \eg Let $
 \pi = \bigl(\begin{smallmatrix}
 1 & 2 & 3 & 4 & 5 & 6 \\
 3 & 4 & 1 & 6 & 2 & 5
 \end{smallmatrix}\bigr)
 $.  $\pi$ can be written as
 \[\pi=(1,3)(2,4,6,5).\]
 
This way of writing a permutation is referred to as {\it cycle notation}.  There is a unique way of writing a permutation as a product of disjoint cycles, up to the ordering of the cycles (they commute) and cyclic equivalence of each cycle (e.g. $(1,2,3)=(2,3,1)=(3,1,2)$).  Since the sizes of the disjoint cycles will always add to $n$ (including if necessary some 1-cycles), we can define the cycle type as follows.

\begin{definition}[Cycle type]
The cycle type of a permutation $\pi$ is the partition $\lambda\vdash n$ whose components are the sizes of the cycles in the disjoint cycle decomposition of $\pi$.
\end{definition}

\eg The cycle type of $\pi=(1,3)(2,4,6,5)$ is $(4,2)$ since it has one cycle of length $2$ and one cycle of length $4$.
\subsection{Conjugation}
\begin{definition}
Let $\pi,g \in S_{n}$. The conjugate of $\pi$ by $g$ is defined to be the permutation $g \pi g^{-1}$, and we say that $\pi$ and $g\pi g^{-1}$ are {\it conjugate} permutations. 
\end{definition}

\begin{theorem}
Let $\pi_1$ and $\pi_2$ be permutations on the set $\mathbf{n}$, then $\pi_1$ and $\pi_2$ are conjugate in $S_{n}$ if and only if they have the same cycle type.
\end{theorem}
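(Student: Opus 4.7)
The plan is to prove both directions by exploiting the key identity that describes how conjugation acts on a cycle. I would start by establishing the lemma that for any cycle $(i_1, i_2, \ldots, i_k) \in S_n$ and any $g \in S_n$,
\[ g (i_1, i_2, \ldots, i_k) g^{-1} = (g(i_1), g(i_2), \ldots, g(i_k)). \]
This is a one-line computation: for any element $g(i_s)$, applying $g\pi g^{-1}$ sends it to $g(\pi(i_s)) = g(i_{s+1})$, and any element not of the form $g(i_s)$ is fixed because $g^{-1}$ sends it outside the support of the cycle. Since conjugation distributes over products, this identity extends to arbitrary disjoint cycle decompositions.

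For the forward direction, suppose $\pi_2 = g \pi_1 g^{-1}$. Write $\pi_1$ as a product of disjoint cycles $\pi_1 = c_1 c_2 \cdots c_r$ where $c_j$ has length $\lambda_j$. Applying the lemma to each factor, $\pi_2 = (g c_1 g^{-1})(g c_2 g^{-1}) \cdots (g c_r g^{-1})$, which is a product of cycles of lengths $\lambda_1, \ldots, \lambda_r$. These cycles are disjoint because $g$ is a bijection (so distinct supports remain distinct), so this is the disjoint cycle decomposition of $\pi_2$, giving it the same cycle type as $\pi_1$.

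For the backward direction, suppose $\pi_1$ and $\pi_2$ have the same cycle type $\lambda \vdash n$. I would write out their disjoint cycle decompositions, including $1$-cycles for fixed points, so that both are expressed as products of cycles of matching lengths in some fixed order:
\[ \pi_1 = (a_{1,1}, \ldots, a_{1,\lambda_1}) \cdots (a_{r,1}, \ldots, a_{r,\lambda_r}), \qquad \pi_2 = (b_{1,1}, \ldots, b_{1,\lambda_1}) \cdots (b_{r,1}, \ldots, b_{r,\lambda_r}). \]
Since both decompositions exhaust $\mathbf{n}$ as a disjoint union, the assignment $g(a_{j,s}) := b_{j,s}$ defines a bijection on $\mathbf{n}$, hence an element $g \in S_n$. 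Applying the cycle-conjugation lemma factor by factor yields $g \pi_1 g^{-1} = \pi_2$.

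The only mild subtlety is making sure that including $1$-cycles for fixed points in both decompositions is legitimate and that the bijection $g$ is well-defined; this hinges on the disjoint cycle decomposition being unique up to ordering of cycles and cyclic rotation within each cycle, which is stated in the preceding text. Beyond that the argument is direct, and no obstacle requires heavier machinery.
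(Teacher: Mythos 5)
Your proof is correct, and there is nothing in the paper to compare it against: this theorem appears in Appendix~\ref{sec:gt_primer}, where the paper explicitly states such standard results without proof, deferring to textbooks. The argument you give --- the conjugation identity $g(i_1,\ldots,i_k)g^{-1}=(g(i_1),\ldots,g(i_k))$, extended over a disjoint cycle decomposition for the forward direction, and the element $g$ built by matching up the two decompositions (with $1$-cycles included) for the converse --- is precisely the canonical textbook proof, and your handling of the two genuine subtleties (disjointness of the conjugated cycles via bijectivity of $g$, and well-definedness of $g$ from the partition of $\mathbf{n}$) is sound.
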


\subsection{Permutation as product of transpositions}
\begin{theorem}
Any permutation $\pi \in S_{n}$ can be written as a product of transpositions.
\end{theorem}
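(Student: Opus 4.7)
The plan is to reduce the claim to a single cycle by invoking the disjoint cycle decomposition theorem stated just above in the appendix, and then exhibit an explicit factorization of an arbitrary cycle into transpositions.

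First, I would apply the disjoint cycle theorem to write $\pi = c_1 c_2 \cdots c_m$, where the $c_i$ are pairwise disjoint cycles. Fixed points (cycles of length $1$) act as the identity on their support and can be omitted from the product; if $\pi$ is itself the identity and a nonempty factorization is desired, one can write $\iota = (1,2)(1,2)$. Hence it suffices to factor a single cycle $(i_1, i_2, \ldots, i_k)$ with $k \geq 2$ into transpositions.

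Next, I would assert the explicit identity
\[
(i_1, i_2, \ldots, i_k) = (i_1, i_k)(i_1, i_{k-1}) \cdots (i_1, i_3)(i_1, i_2),
\]
where, consistent with the right-to-left convention adopted in the paper, the rightmost transposition is applied first. Verification is a short case check on the action at each point. The point $i_1$ is moved only by the rightmost factor $(i_1, i_2)$, which sends it to $i_2$, and no subsequent factor touches $i_2$; so $i_1 \mapsto i_2$. For $2 \leq s \leq k-1$, reading right-to-left, the first factor to move $i_s$ is $(i_1, i_s)$, sending $i_s \to i_1$, after which $(i_1, i_{s+1})$ sends $i_1 \to i_{s+1}$, and no later factor touches $i_{s+1}$; so $i_s \mapsto i_{s+1}$. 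For $s = k$, the leftmost factor $(i_1, i_k)$ sends $i_k \to i_1$, and points outside $\{i_1,\dots,i_k\}$ are fixed by every factor. This matches the action of the cycle exactly.

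Concatenating the transposition factorizations of each $c_i$ gives $\pi$ as a product of transpositions, which finishes the proof. The only real obstacle here is bookkeeping under the right-to-left convention — it is easy to write the factors in reverse order — and this is settled by the one-line action check above.
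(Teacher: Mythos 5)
Your proof is correct and complete: the paper deliberately states this appendix result without proof (it defers to standard texts such as Fraleigh), and your argument --- disjoint cycle decomposition followed by the explicit factorization $(i_1,i_2,\ldots,i_k)=(i_1,i_k)(i_1,i_{k-1})\cdots(i_1,i_2)$ with a careful right-to-left action check --- is exactly the standard argument the paper implicitly relies on; indeed the paper's own worked example $\pi=(1,3)(2,4,6,5)=(1,3)(2,5)(2,6)(2,4)$ is precisely an instance of your formula under the paper's multiplication convention. Nothing further is needed.
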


\eg The permutation $\pi=(1,3)(2,4,6,5)$ can be written as 
\[\pi=(1,3)(2,4,6,5)=(1,3)(2,5)(2,6)(2,4).\]

While the decomposition of a permutation into a product of disjoint cycles is unique, the decomposition of a permutation into a product of transpositions is not unique. However the number of transpositions used must be either always be even, or always be odd. 

\begin{theorem}
A permutation $\pi \in S_n$ can be expressed as a product of either an even number of transpositions or an odd number of transpositions, but not both.
\end{theorem}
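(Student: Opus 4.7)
The plan is to exhibit an integer-valued statistic of $\pi$ whose parity is shifted by exactly one every time we multiply by a transposition. Concretely, let $N(\pi)$ denote the number of cycles in the disjoint cycle decomposition of $\pi$, counting fixed points as cycles of length $1$, so that $N(\iota)=n$. The heart of the argument will be the claim that for any transposition $t=(i,j)$ and any $\pi\in S_n$,
\[
N(t\pi)=N(\pi)\pm 1.
\]

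To prove this key lemma, I would split into two cases according to whether $i$ and $j$ lie in the same cycle of $\pi$. If they do, write that cycle as $(i,a_1,\ldots,a_r,j,b_1,\ldots,b_s)$; a direct computation of where $t\pi$ sends each element shows that on the symbols $\{i,j,a_1,\ldots,a_r,b_1,\ldots,b_s\}$ the product $t\pi$ equals $(i,b_1,\ldots,b_s)(j,a_1,\ldots,a_r)$, while every other cycle of $\pi$ is fixed by $t$ and unaffected. So $N(t\pi)=N(\pi)+1$. If instead $i$ and $j$ lie in distinct cycles $(i,a_1,\ldots,a_r)$ and $(j,b_1,\ldots,b_s)$ of $\pi$, the same kind of computation shows these two cycles are merged by $t\pi$ into the single cycle $(i,b_1,\ldots,b_s,j,a_1,\ldots,a_r)$, giving $N(t\pi)=N(\pi)-1$. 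The edge cases $r=0$ or $s=0$ work identically and require no separate treatment.

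Given the lemma, the theorem follows immediately. Suppose $\pi=t_1t_2\cdots t_k$ is any factorization of $\pi$ into transpositions. Reading this as $k$ successive left multiplications by transpositions applied to the identity, and invoking the lemma $k$ times, we get
\[
N(\pi)\equiv N(\iota)+k \equiv n+k \pmod 2,
\]
so $k\equiv N(\pi)-n \pmod 2$. Since the right-hand side depends only on $\pi$, the parity of $k$ is an invariant of $\pi$ and is the same in every transposition factorization. In particular $\pi$ cannot be expressed both as a product of an even number and as a product of an odd number of transpositions.

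The only real work is the case analysis in the key lemma; the rest is a one-line induction on the length of a factorization. The point to be careful about is the convention for composition (left versus right action) so that ``multiplying by $t$ on the left'' produces the claimed cycle splitting/merging, and in the boundary situations $r=0$ or $s=0$ one should verify that the displayed cycle expressions remain well-formed, but no deeper obstacle is expected.
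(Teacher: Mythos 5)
Your proof is correct, but there is nothing in the paper to compare it against: this theorem appears in Appendix~A, which the paper explicitly states is a collection of standard symmetric-group facts ``stated without proofs,'' deferring to textbooks such as Fraleigh and Herstein. Your argument is the classical one via the cycle-count statistic $N(\pi)$: the lemma $N(t\pi)=N(\pi)\pm 1$ (splitting when $i,j$ share a cycle, merging when they do not) is correct, the edge cases $r=0$ or $s=0$ do indeed go through with fixed points counted as $1$-cycles, and the induction $N(\pi)\equiv n+k \pmod 2$ cleanly yields the parity invariant. One small point, which you yourself flagged: your displayed cycles after multiplication, namely $(i,b_1,\ldots,b_s)(j,a_1,\ldots,a_r)$ in the splitting case, are what one gets under the ``apply $t$ first'' convention; under the paper's convention (the appendix defines $\pi_2\pi_1$ by $i\mapsto \pi_2(\pi_1(i))$, so $t\pi$ applies $\pi$ first) the split is instead $(i,a_1,\ldots,a_r)(j,b_1,\ldots,b_s)$. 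Since only the \emph{number} of cycles enters the argument, this discrepancy is cosmetic and the parity conclusion is unaffected, but in a final write-up you should fix the labels to match the stated convention. For context, the other standard routes are the sign-of-permutation argument via the action on $\prod_{i<j}(x_i-x_j)$, or via determinants of permutation matrices; your cycle-counting version has the advantage of using only the machinery the paper already relies on (cycle decompositions and the effect of transposition multiplication, which the paper invokes in Section~5 when deriving $\newl_t((i,j)\pi)=\newl_t(\pi)\pm 1$ from exactly this theorem).
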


\begin{definition}
A permutation is said to \emph{even} if it can written as a product of an even number of transpositions. Otherwise it is said to be an \emph{odd} permutation.
\end{definition}

\end{document}